\documentclass[hidelinks,onefignum,onetabnum]{siamart251216}

\usepackage{amsfonts}
\usepackage{amsmath} 
\usepackage{amssymb} 

\usepackage{graphicx}

\usepackage{booktabs}
\usepackage{multirow}
\usepackage{threeparttable}
\usepackage{pgfplots}
\usepackage{microtype}

\usepackage{comment}
\usepackage{tikz}
\usetikzlibrary{shapes, arrows.meta, positioning, shadows, calc}

\usepackage{algpseudocode}
\usepackage{algorithm}
\usepackage[bb=dsserif]{mathalpha}
\usepackage{bm}

\usepackage{xcolor,hyperref}
\definecolor{darkblue}{rgb}{0.0,0.0,0.6}
\hypersetup{colorlinks,breaklinks,linkcolor=darkblue,urlcolor=darkblue,anchorcolor=darkblue,citecolor=darkblue}

\ifpdf
\DeclareGraphicsExtensions{.eps,.pdf,.png,.jpg}
\else
\DeclareGraphicsExtensions{.eps}
\fi

\newsiamremark{remark}{Remark}
\newsiamremark{example}{Example}
\newsiamremark{problem}{Problem}
\newsiamremark{assumption}{Assumption}
\newsiamremark{hypothesis}{Hypothesis}
\crefname{hypothesis}{Hypothesis}{Hypotheses}
\newsiamthm{claim}{Claim}
\newsiamremark{fact}{Fact}
\crefname{fact}{Fact}{Facts}

\headers{Sampled-Data Distributionally Robust Control}{C.-H. Hsieh}

\title{Sampled-Data Wasserstein Distributionally Robust Control of Multiplicative Systems: A Convex Relaxation with Performance Guarantees\thanks{
	 {This paper was supported in part by the National Science and Technology Council (NSTC), Taiwan, under Grants: NSTC113--2628--E--007--015-- and NSTC114--2628--E-007--006--.} 
	}
}

\author{
Chung-Han Hsieh\thanks{Department of Quantitative Finance, National Tsing Hua University, Hsinchu, Taiwan (\email{ch.hsieh@mx.nthu.edu.tw}).}
}

\usepackage{amsopn}

\ifpdf
\hypersetup{
pdftitle={An Example Article},
pdfauthor={D. Doe, P. T. Frank, and J. E. Smith}
}
\fi

\begin{document}

\maketitle

\begin{abstract}
	This paper investigates the robust optimal control of sampled-data stochastic systems with multiplicative noise and distributional ambiguity.  We consider a class of discrete-time optimal control problems where the controller \emph{jointly} selects a feedback policy and a sampling period to maximize the worst-case expected  concave utility of the inter-sample growth factor. Modeling  uncertainty via a Wasserstein ambiguity set, we confront the structural obstacle of~``concave-max'' geometry arising from maximizing a concave utility against an adversarial distribution. Unlike standard convex loss minimization, the dual reformulation here requires a minimax interchange within the semi-infinite constraints where the utility's concavity precludes exact strong duality. To address this, we utilize a general minimax inequality to derive a tractable convex relaxation. Our approach yields a rigorous lower bound that functions as a probabilistic performance guarantee. We establish an explicit, non-asymptotic bound on the resulting duality gap, proving that the approximation error is uniformly controlled by the Lipschitz-smoothness of the stage reward and the diameter of the disturbance support. Furthermore, we introduce necessary and sufficient conditions for \emph{robust viability}, ensuring state positivity invariance across the entire ambiguity set.  Finally, we bridge the gap between static optimization and dynamic performance, proving that the optimal value of the relaxation serves as a rigorous deterministic floor for the asymptotic average utility rate almost surely.
	The framework is illustrated on a log-optimal portfolio control problem, which serves as a canonical instance of multiplicative stochastic control.
\end{abstract}

\begin{keywords}
	Stochastic Optimal Control,  Distributionally Robust Optimization, Wasserstein Distance, Sampled-Data Control,  Multiplicative Systems, Convex Relaxation, Optimal Growth Portfolio
\end{keywords}

\begin{MSCcodes}
	 93E20, 93C57, 90C47, 90C34, 93C28, 91G10
\end{MSCcodes}

\section{Introduction} \label{section: Introduction}
	This paper investigates the robust optimal control of sampled-data stochastic systems subject to multiplicative noise and distributional ambiguity. Specifically, we consider a class of discrete-time optimal control problems in which the controller jointly selects a feedback policy and a sampling period to maximize the worst-case expected utility of the inter-sample growth factor. By integrating the Wasserstein metric into a sampled-data framework, we develop a distributionally robust control~(DRC) strategy that ensures \emph{system viability} (state positivity) and achieves robust performance despite epistemic uncertainty in the disturbance distribution.
	 Such stochastic systems with multiplicative noise arise naturally in growth-dominated environments and have been studied extensively; see, e.g., \cite{aoki1975control, gershon2001h, farina2000positive, hsieh2023asymptotic}.
	 Related implementation constraints---including sample-and-hold operation, communication losses, and data-driven controller synthesis---have also been widely investigated in networked and learning-based control; see, e.g., \cite{sinopoli2004kalman, hespanha2007survey, coppens2020data}.

	For concreteness, we use the \emph{Optimal Growth Portfolio} (Kelly) problem in quantitative finance as a running example; see \cite{Kelly_1956, Breiman_1961, cover2012elements, karatzas2021portfolio}. In this context, the system state represents wealth, the multiplicative noise represents asset returns, and the sampling period corresponds to the rebalancing frequency. Importantly, finance serves primarily as a concrete instantiation: the central theoretical challenges---specifically, the coupling between control frequency, transaction costs (actuation friction), and distributional robustness---are fundamental to a broad class of engineering and economic systems driven by multiplicative noise; see, e.g., \cite{hsieh2019positive} for related positivity considerations under implementation delays.

\subsection{Motivation and Literature Review} 
The classical theory of stochastic optimal control is by now well established; see \cite{pham2009continuous, bertsekas2012dynamic}. Numerous extensions and refinements have since been developed to address modeling complexity and uncertainty. In particular, stochastic systems with multiplicative (random-coefficient) effects have motivated a rich body of modeling and control paradigms; see, e.g., \cite{aoki1975control, gershon2001h}. In parallel, robust formulations based on minimax (worst-case) criteria have been proposed for stochastic control problems; see, e.g., \cite{hinrichsen1998stochastic, gonzalez2002minimax}. It is well known that, in the presence of proportional friction, continuous-time limits of such problems typically lead to singular stochastic control formulations \cite{davis1990portfolio}. By contrast, the sampled-data framework proposed here avoids these singularities while retaining rigorous and tractable performance bounds.

\medskip
\paragraph{Sampled-Data Constraints and Friction} 
Real-world controllers operate in discrete time via sample-and-hold mechanisms. A central feature of sampled-data control and stochastic rebalancing systems is that the choice of the sampling period $n$ induces a fundamental trade-off; see \cite{aastrom2013computer, korn2002stochastic}.
While classical results primarily address the preservation of stability under sampling, \cite{nesic2004framework}, we focus instead on growth and distributional~ambiguity.

Indeed, high-frequency control allows for closer tracking of an ideal continuous-time policy, but incurs high accumulated friction (actuation costs), which in our framework is captured through the state-dependent inter-sample growth factor $\Phi_n$. Conversely, low-frequency control reduces frictional costs but increases discretization error and exposure to open-loop drift. 
As a result, the sampling decision directly affects~$\Phi_n$ through both disturbance aggregation and accumulated friction, making the control and sampling period choices structurally non-separable in $\Phi_n$.
Foundational results on sampled-data systems with stochastic sampling periods can be traced back to \cite{de1988stationary}, while LMI-based approaches for multiplicative-noise control were developed in \cite{el1995state}. Recent applications have begun to treat the sampling period itself as a decision variable \cite{kuhn2010analysis,wong2023frequency, hsieh2023asymptotic}.
These methods build upon earlier theoretical frameworks for robust growth optimality under moment uncertainty \cite{rujeerapaiboon2016robust}, yet they typically assume that the driving disturbance distribution is perfectly known.

\medskip
\paragraph{Distributional Ambiguity} 
Distributionally robust optimization (DRO) provides a principled framework for hedging against model misspecification  by optimizing against an ambiguity set of probability measures. Such ambiguity sets can be constructed using a variety of metrics, including the Prohorov metric~\cite{erdougan2006ambiguous}, box-type convex polyhedral uncertainty~\cite{ben2009robust}, Kullback-Leibler divergence~\cite{ben2013robust, hu2013kullback}, and moment-based descriptions~\cite{delage2010distributionally}.  In this work, we focus on Wasserstein ambiguity sets, which are particularly well-suited for data-driven settings and admit finite-sample performance guarantees and asymptotic consistency under mild conditions~\cite{mohajerin2018data, blanchet2019quantifying, gao2023distributionally}.

In the multiperiod Markov decision process literature, Wasserstein-based distributionally robust control has been studied in \cite{yang2020wasserstein}, where a Bellman operator reformulation yields a contraction mapping and enables multi-stage out-of-sample guarantees.
Recent extensions have addressed tractability and partial observability in linear–quadratic settings via approximation techniques; see, e.g., \cite{hakobyan2024wasserstein}.

In parallel, Wasserstein DRO has also been employed in growth-optimal decision problems arising in finance, leading to robust ``Wasserstein–Kelly’’ rules; see, e.g., \cite{rujeerapaiboon2016robust, li2023wasserstein, hsieh2023solving}. However, these formulations typically abstract away from transaction costs and from the strategic choice of the rebalancing horizon---two features that are central under sampled-data implementation and directly couple control actions with the sampling decision. In contrast, our setting involves concave utility maximization for state-multiplicative systems, where the standard convex–concave structure required for minimax equality generally fails, and exact saddle-point reformulations are unavailable.

\medskip
\paragraph{The Concave-Max Geometry} 
Maximizing concave utilities is closely connected to risk-sensitive stochastic control; see, e.g., \cite{whittle1990risk, fleming1995risk}. In that literature, exponential (entropic) utilities often yield tractable dynamic programming recursions, whereas general concave utilities under distributional ambiguity---as considered here---require fundamentally different techniques.

A key theoretical gap arises when applying standard Wasserstein DRO machinery to utility maximization for state-multiplicative systems. Classical finite-sample Wasserstein DRO results, e.g., \cite{mohajerin2018data, gao2023distributionally, kuhn2025distributionally}, are predominantly developed for minimizing a convex loss function, a setting that guarantees minimax \emph{equality} via Sion's Minimax Theorem under the usual convexity–concavity assumptions. In contrast, in our setting the stage utility is concave in the disturbance, and tractability hinges on a minimax interchange embedded within the semi-infinite constraints of the dual representation. Because the required convex–concave conditions fail for this constraint-level interchange, an exact equality is generally unavailable. We therefore rely on the general minimax inequality to construct a conservative convex relaxation that yields a rigorous lower bound on the worst-case expected utility, together with an explicit non-asymptotic bound on the relaxation gap induced by this interchange.

\medskip
\subsection{Contributions} 
To the best of our knowledge, this is the first framework to unify frequency-aware sampled-data robust control with Wasserstein distributional robustness for multiplicative systems. Our specific contributions are as follows:

\emph{A Unified Sampled-Data DRO Formulation:} We formulate the joint optimization of the control input $u$ and sampling period $n$ as a worst-case expected utility maximization problem. We introduce the concept of \emph{Robust Viability} (Lemma \ref{lemma: robust viability condition}), a rigorous invariance condition ensuring that the system state remains strictly positive for all distributions within the Wasserstein ambiguity set, a prerequisite for the well-posedness of the utility maximization criterion. From a control-theoretic perspective, the robust viability constraint plays a role analogous to safety invariance in multiplicative systems subject to model uncertainty.

\emph{Tractable Relaxation via Minimax Inequalities with Probabilistic Guarantees:} Addressing the ``Concave-Max'' difficulty, we derive a computationally tractable convex relaxation of the infinite-dimensional control problem (Theorem \ref{theorem:convex reduction}). In contrast to standard DRO results that typically achieve exact reformulation via minimax equality, we rigorously frame our solution as a \emph{lower bound} on the true optimal value, utilizing a general minimax inequality.	Then, Lemma~\ref{lemma: statistical_bound} provides a probabilistic guarantee showing that this conservatism yields a valid lower confidence bound on the true performance. In safety-critical control, maximizing the floor is often preferred over maximizing the mean.

\emph{Long-Run Performance Guarantees:} Bridging the gap between static optimization and dynamic performance, we establish a theoretical link between the solution of the convex relaxation and the asymptotic behavior of the system (Theorem \ref{theorem:long_run_utility}). We prove that, under ergodic assumptions, the optimal value of our tractable formulation serves as a deterministic floor for the long-run average utility rate almost surely.
 For the specific case of log-utility, this result guarantees a certified floor for the asymptotic capital growth rate, providing a rigorous theoretical justification for the rolling-horizon implementation; see Corollary~\ref{corollary: growth_guarantee}. This result connects our finite-horizon robust optimization to the  theory of ergodic control; see, e.g., \cite{arapostathis1993discrete, borkar1988control}.

\emph{Explicit Non-Asymptotic Duality Gap Analysis:} 
We provide a theoretical certificate for the quality of our approximation. In Proposition \ref{proposition: Minimax Duality Gap Bound}, we derive an explicit, non-asymptotic upper bound on the minimax duality gap. We show that this gap scales with the Lipschitz smoothness of the stage reward function and the diameter of the disturbance support, but is independent of the ambiguity radius. This result justifies the use of the convex relaxation by quantifying the maximum potential suboptimality.

\emph{Empirical Validation and Performance Analysis:} We demonstrate the practical efficacy of the framework on a log-optimal portfolio control problem using a dataset of major large-cap S\&P 500 assets. By implementing the distributionally robust controller via a cutting-plane scheme, we show that jointly optimizing the sampling period and the feedback policy significantly outperforms standard market benchmarks in terms of downside risk and risk-adjusted returns.

\medskip
The remainder of this paper is structured as follows. Section \ref{section: Problem Formulation} details the model components. Section \ref{section: Main Results} presents our main theoretical results, including the tractable reformulation, the duality gap analysis, and various performance guarantees. Section~\ref{section: Illustrative Examples} presents an illustrative example in quantitative finance, and Section~\ref{section: conclusion} concludes the~paper.

\section{Preliminaries and Problem Formulation} \label{section: Problem Formulation}
This section formulates a class of sampled-data stochastic \emph{multiplicative} control systems under distributional ambiguity.

\subsection{Notation} \label{subsection: notations}
Throughout the paper, $\mathbb{R}$  denotes the set of real numbers, $\mathbb{R}_{+}$ the set of nonnegative real numbers, and $\overline{\mathbb{R}} := \mathbb{R} \cup \{-\infty, \infty\}$ the extended reals.
All random objects are defined on a probability space~$(\Omega, \mathcal{F}, \mathbb{P}).$ 
The notation~$\|\cdot \|$ denotes the $\ell_r$-norm on~$\mathbb{R}^k$ (where dimension $k$ is determined by context) for a fixed~$r \in [1, \infty]$ chosen throughout the paper. We write $\| \cdot \|_*$ for the dual norm associated with $\|\cdot\|$, defined as~$\| z \|_* := \sup_{x \in \mathbb{R}^k} \{  z^\top x  : \| x \| \leq 1 \} $ for~$z \in \mathbb{R}^k$.  
We denote by $\delta_x$ the Dirac measure concentrating unit mass at~$x.$ The product of two probability distributions~$\mathbb{P}_1$ and $\mathbb{P}_2$ on~$\mathfrak{X}_1$ and $\mathfrak{X}_2$, respectively, is the distribution $\mathbb{P}_1 \otimes \mathbb{P}_2$ on $\mathfrak{X}_1 \times \mathfrak{X}_2.$

\subsection{Sampled-Data Multiplicative Dynamics}\label{subsection:dynamics}
Consider a discrete-time stochastic control system with a fixed sampling period $n \geq 1$. Let~$k \in \{0, 1, 2, \dots\}$ denote the discrete time index, which corresponds to the physical time interval~$[kn, (k+1)n)$. We denote the scalar system state at the beginning of the~$k$th interval by $V_k := V(kn) \in \mathbb{R}_{+}$, and the control input by $u_k \in \mathcal{U} \subset \mathbb{R}^m$. The control is implemented via a \emph{zero-order hold} mechanism, remaining constant throughout the interval~$[kn, (k+1)n)$. The system evolves according to the \emph{multiplicative} state~dynamics:
\begin{align}\label{eq:mult_dynamics_general}
	V_{k+1} = V_k \, \Phi_n(u_k, \mathcal{X}_{k,n}), \qquad k=0,1,2,\dots,
\end{align}
where $\mathcal{X}_{k,n}$ represents the \emph{aggregated exogenous input (disturbance)} over the $k$th interval, taking values in a compact set $\mathfrak{X}_n \subset \mathbb{R}^d$. The function $\Phi_n: \mathcal{U} \times \mathfrak{X}_n \to (0, \infty)$ is the state-transition map, representing the inter-sample \emph{growth factor}. This function captures the coupled effects of the control decision and the uncertainty, implicitly accounting for actuation friction (e.g., efficiency losses, budget costs, etc.) and the multiplicative forcing of the external environment.

\begin{remark}[Dimensionality Mismatch] \rm
	The formulation explicitly allows for distinct control and disturbance dimensions ($m \neq d$). This flexibility captures diverse scenarios, such as \emph{under-actuation} ($m < d$), where a low-dimensional control (e.g., a scalar leverage ratio) faces high-dimensional noise, or \emph{redundancy} ($m > d$), such as a large portfolio allocation driven by a few low-dimensional latent factors.
\end{remark}

\begin{figure}[htbp]
	\centering
	\resizebox{0.7\textwidth}{!}{%
		\begin{tikzpicture}[
			font=\small,
			>=Stealth, thick,
			block/.style={draw, rectangle, align=center, minimum height=3.2em, minimum width=6.0em, fill=blue!3},
			smallblock/.style={draw, rectangle, align=center, minimum height=2.8em, minimum width=5.0em, fill=blue!3},
			dottedsep/.style={dotted, gray}
			]
			
			\node[block] (ctrl) {DRO Controller\\ $u_k = u^*$ \\ $n = n^*$};
			\node[smallblock, right=2.2cm of ctrl] (zoh) {ZOH\\ $u(t) \equiv u_k$};
			\node[block, above=1.7cm of ctrl] (data) {Data Buffer/History};
			
			\node[block, right=2.6cm of zoh] (plant) {Multiplicative update\\ $V_{k+1}=V_k\,\Phi_n(u_k,\mathcal{X}_{k,n})$};
			
			\node[smallblock, above=1.6cm of plant] (agg)
			{Aggregator\\ exogenous disturbances \\ on $[t_k, t_{k+1})$ $\mapsto\ \mathcal{X}_{k,n}$};
			
			\node[smallblock, below=2.0cm of zoh] (sampler) {Sampler\\ $t_{k+1} = t_k  + n$};
 
			\draw[dottedsep] ($(zoh.north east)!0.5!(plant.north west) + (0,3.0)$)
			-- ($(zoh.south east)!0.5!(plant.south west) + (0,-3.0)$);
			\node[gray, above=0.9cm of zoh] {\footnotesize \emph{Discrete-time decision}};
			\node[gray, above=0.9cm of plant] {\footnotesize \emph{Inter-sample environment}};
			
			\draw[->] (ctrl) -- node[above] {$u^*$} (zoh);
			\draw[->] (zoh) -- node[above] {$u_k$} (plant);
			\draw[->] (data) -- node[right] { $\widehat{\mathbb{F}}_n$} (ctrl);		
 

			\draw[->] ($(agg.north)+(0,0.9)$) -- node[right] {Disturbances on $[t_k, t_{k+1})$} (agg);
			\draw[->] (agg) -- node[right] {$\mathcal{X}_{k,n}$} (plant);
			\draw[->] (agg) -- node[above] {$\widehat{ \mathcal{X}}_{n}^{(j)}$} (data);
			
			\draw[->] (plant) -- ++(3.8,0) coordinate (out);
			\node[above] at ($(plant.east)+(1.0,0)$) {$V_{k+1}$};
			
			\draw[->] ($(plant.east)+(1.2,0)$) |- (sampler.east);
			\draw[->, dashed, gray] (sampler.west) -| node[pos=0.65, above] {\qquad \quad $V_{k+1}$} (ctrl.south);
			
		\end{tikzpicture}
	}
	\caption{Schematic of the sampled-data loop. The analysis uses the sampled state $V_k=V(t_k)$ and an aggregated disturbance $\mathcal{X}_{k, n}$ over $[t_k, t_{k+1})$ where $t_k = kn.$}
	\label{fig:control_loop}
\end{figure}
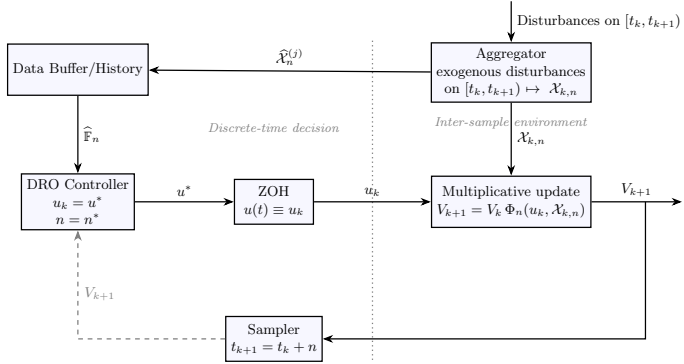

\subsection{Performance Criterion and State Positivity}\label{subsec:utility_viability}
Let $U :\mathbb{R}_{+} \to \mathbb{R}$ be a concave, non-decreasing utility function.
We measure performance over a sampling period by the \emph{stage reward} given by
	$
		r_n(u, x):=U( \Phi_n(u, x) ).
	$
A canonical example of a risk-averse growth factor corresponds to the logarithmic utility~$U(t)=\log t$.

To ensure the problem is well-posed despite the uncertainty, we enforce a viability constraint.
Fix a viability margin $\eta>0$ such that $[\eta, \infty)\subseteq \mathrm{dom}(U)$. We define the \emph{set of
	admissible controls} as:
\begin{equation}\label{eq:viable_set_generalU}
	\mathcal U_{\rm v}(n;\eta)
	:= \big\{ u \in \mathcal{U} : \Phi_n(u,x) \ge \eta, \quad \forall x \in \mathfrak{X}_n \big\}.
\end{equation}
This condition guarantees that the stage reward $r_n(u,x)$ is well-defined and finite uniformly over the disturbance support $\mathfrak{X}_n$.

\subsection{Standing Assumptions on Growth Factor}\label{subsec:assumptions}
To ensure computational tractability and derive an explicit non-asymptotic bound on the duality gap, we impose the following structural assumptions on the system data and the utility function.

\begin{assumption}[Regularity and Smoothness]\label{ass:U_phi}
	For each fixed sampling period $n \geq 1$, we assume the following conditions:
	\begin{enumerate}
		\item[(A1)] (\emph{Compactness}) The admissible control set $\mathcal{U} \subset \mathbb{R}^m$ is nonempty, compact, and convex. The disturbance support $\mathfrak{X}_n$ is nonempty and compact.
		
		\item[(A2)] (\emph{Continuity and Separate Concavity})
		The stage reward map 
		$
		(u,x) \mapsto r_n(u, x) 
		$
		is continuous and concave in $u$ for each $x \in\mathfrak{X}_n$, and concave in $x$ on $ \operatorname{conv}(\mathfrak{X}_n)$ for each $u \in \mathcal{U}_{\rm v}(n; \eta)$, where ${\rm conv}(\cdot)$ denotes the convex hull.
		
	\item[(A3)] (\emph{Uniform $L_n$-Smoothness})  
	For each $u \in \mathcal{U}_{\rm v}(n; \eta)$, the map $x \mapsto r_n(u, x)$ is differentiable on $\mathfrak{X}_n$ and its gradient $\nabla_x r_n(u, x)$ satisfies a uniform Lipschitz condition: there exists a constant $L_n > 0$ such that
	\[
		\sup_{u \in \mathcal{U}_{\rm v}(n; \eta)} \ 
		\sup_{\substack{x,y\in\mathfrak{X}_n\\ x\neq y}}
		\frac{\big\| \nabla_x r_n(u, x) - \nabla_x r_n(u, y) \big\|_*}{\| x - y\|}
		 \leq L_n .
	\]
	\end{enumerate}

\end{assumption}

\begin{remark}\rm
	Assumption~\ref{ass:U_phi}(A2) preserves the concave-max character of the problem. A sufficient condition for (A2) is that $\Phi_n(u,x)$ is concave in $(u,x)$ and $U$ is concave and nondecreasing on $[\eta, \infty)$.  Assumption~\ref{ass:U_phi}(A3) is the only assumption linking the duality gap bound to the specific choice of utility. The following example demonstrates that Assumption~\ref{ass:U_phi}(A3) holds for the affine-logarithmic case.
\end{remark}

\begin{example}[Log-Optimal Portfolio Control] \label{example: log-optimal portfolio control}
	Let $u  \in \mathcal{U}_{\rm v}(n;\eta)$ denote the vector of portfolio weights invested in risky assets for some $\eta > 0$. 
	Consider the affine growth factor (common in portfolio optimization, e.g., see \cite{hsieh2023solving}) $\Phi_n(u, x) = u^\top x + c_n(u)$, and the utility $U(y) = \log(y)$. The stage reward is
	$
	r_n(u, x) = \log( u^\top x + c_n(u) ).
	$
	The gradient with respect to the disturbance $x$ is
	$
	\nabla_x r_n(u, x) = \frac{u}{u^\top x + c_n(u)} = \frac{u}{\Phi_n(u, x)}.
	$
	Note that since $u\in\mathcal U_{\rm v}(n;\eta)$, we have $\underline{\Phi}_n(u):=\min_{x\in\mathfrak X_n}\Phi_n(u,x)\ge\eta$.
	Thus, for any~$x,y \in \mathfrak{X}_n$,
	\begin{align*}
		\|\nabla_x r_n(u,x)-\nabla_x r_n(u,y)\|_*
		&=\left\|u\left(\frac{1}{\Phi_n(u,x)}-\frac{1}{\Phi_n(u,y)}\right)\right\|_* \\
		&\leq \|u\|_* \,\frac{|\Phi_n(u,y)-\Phi_n(u,x)|}{\Phi_n(u,x)\,\Phi_n(u,y)} \\
		&=\|u\|_* \,\frac{|u^\top(y-x)|}{\Phi_n(u,x)\,\Phi_n(u,y)} \\
		&\leq \|u\|_*\,\frac{\|u\|_*\,\|y-x\|}{\underline{\Phi}_n(u)^2}
	= \frac{\|u\|_*^2}{\underline{\Phi}_n(u)^2}\,\|x-y\|,
	\end{align*}
	where the last inequality holds by the generalized Cauchy-Schwarz inequality $|u^\top z| \leq \|u\|_* \|z\|$, e.g., see~\cite{beck2017first}.
	Hence $\nabla_x r_n(u, x)$ is $L_n(u)$-Lipschitz on $\mathfrak X_n$ with
	$
	L_n(u)=\frac{\|u\|_*^2}{\underline{\Phi}_n(u)^2}.
	$
	In particular, since $\underline{\Phi}_n(u) \geq \eta$, we have $L_n(u)\le \frac{\|u\|_*^2}{\eta^2}$, and thus
	\[
	\sup_{u\in\mathcal U_{\rm v}(n;\eta)} L_n(u) \leq \sup_{u\in\mathcal U_{\rm v}(n;\eta)}\frac{\|u\|_*^2}{\eta^2}.
	\]
	Since $\mathcal U_{\rm v}(n;\eta)\subseteq \mathcal U$ and $\mathcal U$ is compact by (A1), $\sup_{u\in\mathcal U_{\rm v}(n;\eta)}\|u\|_*<\infty$; therefore a finite uniform constant $L_n:= \sup_{u\in\mathcal U_{\rm v}(n;\eta)} L_n(u)$ exists, verifying Assumption~\ref{ass:U_phi}(A3) for the affine-log case.
\end{example}

\subsection{Wasserstein Ambiguity Set}
As discussed in Section~\ref{section: Introduction}, the true disturbance distribution is often unknown. To hedge against this uncertainty without the excessive conservatism of worst-case robust optimization, we model distributional uncertainty using the Wasserstein metric. 
Let~$\mathcal{M}(  \mathfrak{X}_n )$  be the space of all probability distributions~$\mathbb{F}$ supported on~$ \mathfrak{X}_n$.
Since $\mathfrak{X}_n$ is compact, all such distributions have finite moments.

\begin{definition}[Wasserstein Metric] \label{definition:Wasserstein metric} \rm
	For $p \in [1, \infty)$ and any two distributions $\mathbb{F}^1, \mathbb{F}^2 \in \mathcal{M}( \mathfrak{X}_n)$, the \emph{p-Wasserstein metric}~$W_p: \mathcal{M}( \mathfrak{X}_n) \times \mathcal{M}( \mathfrak{X}_n) \to \mathbb{R}$ induced by the ground norm $\|\cdot\|$ fixed in Section~\ref{subsection: notations} is defined~by
	\begin{align*}
		W_p(\mathbb{F}^1, \mathbb{F}^2)
		& := \left( \inf_{\Pi \in \Gamma(\mathbb{F}^1, \mathbb{F}^2)}     \mathbb{E}_{( \mathcal{X}^1, \mathcal{X}^2) \sim \Pi} [\|\mathcal{X}^1 - \mathcal{X}^2\|^p ]  \right)^{1/p}
	\end{align*}
	where $\Gamma(\mathbb{F}^1, \mathbb{F}^2)$ is the set of joint distributions (couplings) on $\mathfrak{X}_n \times \mathfrak{X}_n$ with marginals $\mathbb{F}^1$ and $\mathbb{F}^2$.  
\end{definition}

Throughout the paper, we fix an order $p \in [1,\infty)$ and use the $p$-Wasserstein metric~$W_p$. All Wasserstein ambiguity sets in this paper are defined with respect to the same ground norm $\|\cdot\|$.
Consistent with data-driven formulations, e.g., see~\cite{calafiore2013direct, mohajerin2018data, yang2020wasserstein},
let $\widehat{\mathcal{X}}_n^{(1)}, \dots, \widehat{\mathcal{X}}_n^{(N_n)} \in \mathfrak{X}_n$ denote the observed samples of the $n$-period aggregated disturbance, where $N_n$ is the number of samples available at horizon $n$. The associated empirical distribution is
$
\widehat{\mathbb{F}}_n := \frac{1}{N_n}\sum_{j=1}^{N_n}\delta_{\widehat{\mathcal{X}}_n^{(j)}},
$
where $\delta_x$ denotes the Dirac measure at~$x$.

For a radius $\varepsilon \geq 0$, the \emph{Wasserstein ambiguity set} is defined as the ball of radius $\varepsilon$ centered at $\widehat{\mathbb{F}}_n$:
\begin{align} \label{def: Wasserstein ball}
	\mathcal{B}_\varepsilon^{(p)}( \widehat{\mathbb{F}}_n ) := \left\{ \mathbb{F} \in \mathcal{M}( { \mathfrak{X} }_n ) : W_p (\mathbb{F}, \widehat{\mathbb{F}}_n) \leq \varepsilon \right\}.
\end{align}

To ensure statistical validity, the ambiguity radii are calibrated specifically for each sampling period as follows.

\begin{definition}[Calibrated Ambiguity Radii] \label{def: calibrated_radius} \rm
	Fix a global confidence level $\beta \in (0,1)$ and a finite candidate set $\mathcal N \subseteq \mathbb{N}$.
	For each $n \in \mathcal{N}$, choose a radius $\varepsilon_n \geq 0$ such that
	\begin{align}\label{eq:coverage_union}
		\mathbb{P} \left(\mathbb F_{\mathrm{true}, n} \in \mathcal{B}_{\varepsilon_n}^{(p)} (\widehat{\mathbb F}_n)\right)  \geq 1-\frac{\beta}{|\mathcal N|},
	\end{align}
	where $|\mathcal{N}|$ denotes the cardinality of the set $\mathcal{N}$, and $\mathbb{F}_{\mathrm{true}, n}$ denotes the true distribution of the generic $n$-period aggregated disturbance $\mathcal{X}$.\footnote{ 
		Such radii can be obtained from concentration bounds, e.g., see \cite{fournier2015rate}, under suitable sampling assumptions, or calibrated empirically (e.g., via block bootstrap in the presence of serial dependence).
		}
	Consequently, by a union bound,
	\[
	\mathbb{P} \left( \bigcap_{n \in \mathcal{N}} \{ \mathbb F_{\mathrm{true}, n}\in \mathcal{B}_{\varepsilon_n}^{(p)} (\widehat{\mathbb F}_n) \} \right) \geq 1-\beta.
	\]
\end{definition}

\begin{remark} \rm
		Note that for the generic ball definition, if $\varepsilon = 0$, then $\mathcal{B}_\varepsilon^{(p)}( \widehat{\mathbb{F}}_n )  = \{ \widehat{\mathbb{F}}_n \}$, a singleton empirical distribution. 
\end{remark}

\subsection{Robust Viability and Admissible Controls} \label{subsection: Robust Viability}
In the context of multiplicative systems, state positivity is a prerequisite for the well-posedness of the performance criterion. Similar to stability or invariance requirements in classical control, see \cite{chen1998linear, farina2000positive}, we require the control $u$ to guarantee that the state trajectory remains strictly positive uniformly over the ambiguity set. 

We formalize this via the concept of \emph{robust viability}. Recall the set of admissible controls $\mathcal{U}_{\rm v}(n;\eta)$ defined in \eqref{eq:viable_set_generalU}. The following lemma establishes that this static constraint on the growth factor is sufficient to guarantee dynamic state invariance under distributional uncertainty.

\begin{lemma}[Robust Viability Condition] \label{lemma: robust viability condition} \it
	Fix a sampling period $n \ge 1$. Let $V_0 > 0$. If a control $u$ is admissible, i.e., $u \in \mathcal{U}_{\rm v}(n;\eta)$, then for any ambiguity radius $\varepsilon \ge 0$ and any distribution $\mathbb{F} \in \mathcal{B}_\varepsilon^{(p)}(\widehat{\mathbb{F}}_n)$, the state evolution satisfies 
	\[
	\mathbb{P}^\mathbb{F}( V_{k+1} \ge \eta V_k ) = 1 \quad \text{and} \quad \mathbb{P}^\mathbb{F}( V_{k+1} > 0 ) = 1, \quad \forall k \ge 0.
	\]
\end{lemma}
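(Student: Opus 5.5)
The proof is a direct pathwise argument followed by induction on $k$. The key observation is that every distribution in the ambiguity set is, by definition \eqref{def: Wasserstein ball}, an element of $\mathcal{M}(\mathfrak{X}_n)$. It is therefore supported on $\mathfrak{X}_n$, whatever the radius $\varepsilon \ge 0$ and whatever the data. Consequently, for any $\mathbb{F} \in \mathcal{B}_\varepsilon^{(p)}(\widehat{\mathbb{F}}_n)$, the disturbance satisfies $\mathbb{P}^{\mathbb{F}}(\mathcal{X}_{k,n} \in \mathfrak{X}_n) = 1$ for every $k$. No property of the Wasserstein metric beyond this support restriction is needed.

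\textbf{Step 1: the one-step bound.} Fix $k \ge 0$ and work on the full-measure event $\{\mathcal{X}_{k,n} \in \mathfrak{X}_n\}$. Admissibility $u \in \mathcal{U}_{\rm v}(n;\eta)$ means, by \eqref{eq:viable_set_generalU}, that $\Phi_n(u,x) \ge \eta$ for every $x \in \mathfrak{X}_n$. Hence $\Phi_n(u,\mathcal{X}_{k,n}) \ge \eta$ almost surely. Here $u_k = u$ is held constant by the zero-order hold, and if the policy varies with $k$, each $u_k$ is assumed to lie in $\mathcal{U}_{\rm v}(n;\eta)$.

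\textbf{Step 2: induction on positivity.} Suppose $V_k > 0$ almost surely; the base case holds since $V_0 > 0$ is deterministic. Multiplying the bound from Step 1 by $V_k$ and using the dynamics \eqref{eq:mult_dynamics_general} gives
\[
V_{k+1} = V_k\,\Phi_n(u,\mathcal{X}_{k,n}) \ge \eta V_k \quad \text{almost surely}.
\]
Since $\eta > 0$ and $V_k > 0$, it follows that $V_{k+1} \ge \eta V_k > 0$ almost surely, which closes the induction. Alternatively, iterating gives $V_k \ge \eta^k V_0 > 0$. Both claimed probability-one statements then hold for every $k \ge 0$, uniformly over $\mathbb{F}$ in the ball.

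\textbf{Main subtlety.} There is no real analytic obstacle; the one point to handle carefully is measurability and the meaning of $\mathbb{P}^{\mathbb{F}}$. I would state explicitly that under $\mathbb{P}^{\mathbb{F}}$ each $\mathcal{X}_{k,n}$ has law $\mathbb{F}$ (or at least marginal support in $\mathfrak{X}_n$). Then the countable intersection over $k$ of the full-measure events $\{\mathcal{X}_{k,n} \in \mathfrak{X}_n\}$ still has probability one, so the pathwise argument applies simultaneously for all $k$.
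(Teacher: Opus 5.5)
Your proposal is correct and follows essentially the same route as the paper. Both arguments use the fact that the ambiguity set lies in $\mathcal{M}(\mathfrak{X}_n)$, so admissibility gives $\Phi_n(u,\mathcal{X})\ge\eta$ almost surely; multiplying by $V_k>0$ and inducting from $V_0>0$ then yields both claims, and your remarks on the zero-order hold and the countable intersection over $k$ only make explicit what the paper leaves implicit.
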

\begin{proof}
	Fix $\varepsilon \ge 0$ and $\mathbb{F} \in \mathcal{B}_\varepsilon^{(p)}(\widehat{\mathbb{F}}_n)$. By definition, the ambiguity set is a subset of $\mathcal{M}(\mathfrak{X}_n)$, meaning every feasible distribution is supported on the compact set $\mathfrak{X}_n$.
	Condition~$u \in \mathcal{U}_{\rm v}(n;\eta)$ implies that $\Phi_n(u, \mathcal{X}) \ge \eta$ holds almost surely with respect to any such $\mathbb{F}$, where $\mathcal{X}$ denotes a generic $n$-period aggregated disturbance.
	Since the dynamics are multiplicative, i.e., $V_{k+1} = V_k \Phi_n(u, \mathcal{X})$, if $V_k > 0$, it follows almost surely that $V_{k+1} \ge \eta V_k$. Given $V_0 > 0$ and $\eta > 0$, strictly positive invariance follows by induction.
\end{proof}

While robust viability is a hard constraint, one might consider a probabilistic relaxation. We define this notion to clarify the hierarchy of constraints.

\begin{definition}[$(\varepsilon, \delta)$-Viability] \rm \label{definition: epsilon_delta viability}
	Fix $n \ge 1$, $\varepsilon \geq 0$, and $\delta \in (0,1)$. A control $u \in \mathcal{U}$ is \emph{$(\varepsilon, \delta)$-viable} if
	$
	\inf_{\mathbb{F} \in \mathcal{B}_\varepsilon^{(p)}(\widehat{\mathbb{F}}_n)} \mathbb{P}^{\mathbb{F}} \big( \Phi_n(u, \mathcal{X}) \ge \eta \big) \ge 1-\delta.
	$
\end{definition}

 \begin{remark}\rm
 	Unlike robust viability, $(\varepsilon,\delta)$-viability does not impose $\Phi_n(u,x)\ge \eta$ uniformly over $x\in\mathfrak X_n$.
 	Consequently, for utilities with a singularity at zero (e.g., $U=\log$), $(\varepsilon,\delta)$-viability alone does not in general ensure that
 	$\mathbb E^{\mathbb F}[U(\Phi_n(u,\mathcal X))]$ is finite for all $\mathbb F\in\mathcal B_\varepsilon^{(p)}(\widehat{\mathbb F}_n)$.
 \end{remark}

The following theorem establishes the hierarchy between the hard robust constraint and the probabilistic chance constraint.

\begin{theorem}[Hierarchy of Viability Conditions]\label{theorem: Implication Hierarchy}
	The following implications hold for a given $n \ge 1$:
	
	$(i)$ If $u \in \mathcal{U}_{\rm v}(n;\eta)$, then it is $(\varepsilon, \delta)$-viable for \emph{all} $\varepsilon \ge 0$ and $\delta \in (0,1)$.
	
	$(ii)$ Conversely, if $u$ is $(\varepsilon, \delta)$-viable for every $\varepsilon \geq 0$ and every $\delta \in (0,1)$, then
	\[
	\inf_{x \in \mathfrak{X}_n} \Phi_n(u, x) \geq \eta.
	\]
\end{theorem}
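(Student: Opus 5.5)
Part $(i)$ follows by the same support argument used in the proof of Lemma~\ref{lemma: robust viability condition}. Fix $\varepsilon \ge 0$ and $\delta\in(0,1)$, and take any $\mathbb{F}\in\mathcal{B}_\varepsilon^{(p)}(\widehat{\mathbb{F}}_n)\subseteq \mathcal{M}(\mathfrak{X}_n)$. Since $\mathbb{F}$ is supported on $\mathfrak{X}_n$ and $\Phi_n(u,x)\ge\eta$ for every $x\in\mathfrak{X}_n$, the event $\{\Phi_n(u,\mathcal{X})\ge\eta\}$ has $\mathbb{P}^{\mathbb{F}}$-probability one. Taking the infimum over the ball, the infimum equals $1$, which is at least $1-\delta$. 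This holds uniformly in $\varepsilon$ and $\delta$.

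For part $(ii)$ I will argue by contradiction. Suppose $\inf_{x\in\mathfrak{X}_n}\Phi_n(u,x)<\eta$. Compactness of $\mathfrak{X}_n$ (A1) and continuity of $\Phi_n(u,\cdot)$ give a point $x_0\in\mathfrak{X}_n$ with $\Phi_n(u,x_0)<\eta$. Continuity of $\Phi_n$ follows from (A2) only when $U$ is strictly increasing, so I prefer to avoid it. The infimum being below $\eta$ already yields some $x_0\in\mathfrak{X}_n$ with $\Phi_n(u,x_0)<\eta$, by the definition of infimum, and no attainment is needed.

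The key step is to exhibit a distribution in a sufficiently large ball that charges $x_0$ with mass one. Take $\mathbb{F}=\delta_{x_0}\in\mathcal{M}(\mathfrak{X}_n)$. The only coupling with $\widehat{\mathbb{F}}_n$ is the product coupling, so
\[
W_p(\delta_{x_0},\widehat{\mathbb{F}}_n)=\Big(\tfrac{1}{N_n}\sum_{j=1}^{N_n}\|x_0-\widehat{\mathcal{X}}_n^{(j)}\|^p\Big)^{1/p}\le \operatorname{diam}(\mathfrak{X}_n)<\infty,
\]
and the bound is finite by compactness. Choose $\varepsilon:=\operatorname{diam}(\mathfrak{X}_n)$; in fact this radius gives $\mathcal{B}_\varepsilon^{(p)}(\widehat{\mathbb{F}}_n)=\mathcal{M}(\mathfrak{X}_n)$. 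Then
\[
\inf_{\mathbb{F}}\mathbb{P}^{\mathbb{F}}\big(\Phi_n(u,\mathcal{X})\ge\eta\big)\le \mathbb{P}^{\delta_{x_0}}\big(\Phi_n(u,\mathcal{X})\ge\eta\big)=0<1-\delta
\]
for any $\delta\in(0,1)$. This contradicts $(\varepsilon,\delta)$-viability at this particular pair, and so establishes $\inf_{x}\Phi_n(u,x)\ge\eta$.

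There is no serious obstacle. The one point needing care is that the hypothesis must be applied at a radius large enough to contain the Dirac mass; "every $\varepsilon\ge0$" supplies this, and the finite diameter from (A1) makes the radius finite. I will also note in passing that the argument uses only a single $\delta$ and a single large $\varepsilon$, so the hypothesis of $(ii)$ is stronger than necessary.
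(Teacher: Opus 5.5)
Your proof is correct and follows essentially the same route as the paper. Part $(i)$ uses the fact that every measure in the ball is supported on $\mathfrak{X}_n$. Part $(ii)$ argues by contradiction with the Dirac mass $\delta_{x_0}$ at a violating point, which lies in a Wasserstein ball of sufficiently large (finite, by compactness) radius and gives the event $\{\Phi_n(u,\mathcal{X})\ge\eta\}$ probability zero. Your uniform choice $\varepsilon=\operatorname{diam}(\mathfrak{X}_n)$, and your remark that no attainment of the infimum is needed, are harmless refinements of the paper's choice $\varepsilon'\ge W_p(\delta_{x^*},\widehat{\mathbb{F}}_n)$.
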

\begin{proof}  
	$(i)$ If $\inf_{x \in \mathfrak{X}_n} \Phi_n(u, x) \ge \eta$, then the event $\{\Phi_n(u, \mathcal{X}) \geq \eta\}$ occurs almost surely under any distribution supported on $\mathfrak{X}_n$. Thus, its probability is 1 under any distribution supported on $\mathfrak{X}_n$, satisfying the condition for any $\delta$.
	
	$(ii)$ We proceed by contradiction. Suppose $u$ is $(\varepsilon, \delta)$-viable for all $\varepsilon, \delta$, but there exists $x^* \in \mathfrak{X}_n$ such that $\Phi_n(u, x^*) < \eta$. Let $\mathbb{F}^* = \delta_{x^*}$ be the Dirac measure at~$x^*$. 
	Since $x^* \in \mathfrak{X}_n$, $\mathbb{F}^* \in \mathcal{M}(\mathfrak{X}_n)$.
	Consider the distance $W_p(\mathbb{F}^*, \widehat{\mathbb{F}}_n)$. Since $\mathfrak{X}_n$ is compact, this distance is finite. Choose an ambiguity radius $\varepsilon' \geq W_p(\mathbb{F}^*, \widehat{\mathbb{F}}_n)$. Then $\mathbb{F}^*  \in \mathcal{B}_{\varepsilon'}^{(p)}(\widehat{\mathbb{F}}_n)$.
	Under such $\mathbb{F}^*$, the event $\{\Phi_n(u, \mathcal{X}) \geq \eta\}$ has probability zero. This implies
	$$
	\inf_{\mathbb{F} \in \mathcal{B}_{\varepsilon'}^{(p)}(\widehat{\mathbb{F}}_n)} \mathbb{P}^{\mathbb{F}} \big( \Phi_n(u, \mathcal{X}) \ge \eta \big) = 0,
	$$
	which violates the condition that $u$ is $(\varepsilon', \delta)$-viable for any $\delta < 1$.
\end{proof}

Based on this hierarchy, throughout the remainder of this work we impose the robust viability constraint
$u \in \mathcal{U}_{\rm v}(n; \eta)$ as defined in~\eqref{eq:viable_set_generalU}.
Since $\mathfrak{X}_n$ is compact and~$x \mapsto \Phi_n(u,x)$ is continuous for each fixed $u$, this condition is equivalently written~as
\begin{align} \label{ineq: robustly viable condition min form}
	\min_{x\in\mathfrak X_n}\Phi_n(u,x) \geq \eta.
\end{align}
In particular, the minimum is attained. Moreover, if $u\mapsto \Phi_n(u,x)$ is concave for each $x$, then
$g(u) := \min_{x \in \mathfrak{X}_n}\Phi_n(u,x)$ is concave as a pointwise infimum of concave functions; hence its superlevel set $\{u \in \mathcal{U}: \ g(u) \geq \eta\}=\mathcal U_{\rm v}(n;\eta)$ is convex.

\subsection{Distributionally Robust Control Formulation}
We are now ready to state the sampled-data distributionally robust control problem. The objective is to maximize the worst-case expected utility of the growth factor over the sampling period.

\begin{problem}[Horizon-Consistent Distributionally Robust Control] \rm \label{problem: general DRO problem}
	Fix a global confidence level $\beta\in(0,1)$ and a viability margin $\eta > 0$.
	Let $\mathcal N\subset\mathbb N$ be a finite candidate set of sampling periods, and for each $n \in \mathcal{N}$
	construct $\widehat{\mathbb F}_n$ from $N_n$ samples and choose $\varepsilon_n$ according to Definition~\ref{def: calibrated_radius}.
	The joint optimization over the sampling period $n$ and control $u$ is
	\begin{align}\label{eq: general_DRO_formulation}
		\max_{n\in\mathcal N}\ \max_{u\in \mathcal U_{\rm v}(n; \eta) }\
		\inf_{\mathbb F\in\mathcal{B}_{\varepsilon_n}^{(p)}(\widehat{\mathbb F}_n)}\ \frac{1}{n}\,
		\mathbb E^{\mathbb F}\!\left[U(\Phi_n(u,\mathcal X))\right].
	\end{align}
\end{problem}

Problem~\eqref{eq: general_DRO_formulation} is a mixed-integer optimization problem due to the discrete variable~$n$. However, for a fixed sampling period~$n$, the problem of maximizing
$$
	\inf_{\mathbb{F} \in \mathcal{B}_{\varepsilon_n}^{(p)}( \widehat{\mathbb{F}}_n )}\, \mathbb{E}^{\mathbb{F}} \left[ U(\Phi_n(u, \mathcal{X})) \right]
$$
over $u \in \mathcal{U}_{\rm v}(n; \eta)$ is a concave maximization problem, provided $U ( \Phi_n(u, \cdot))$ is concave. The global solution is obtained by solving these finitely many tractable subproblems and selecting the optimal sampling period $n^*$.

\begin{remark}[Relation to Chance Constraints] \rm
	We enforce the robust viability constraint $u \in  \mathcal{U}_{\rm v}(n;\eta)$ rather than the probabilistic $(\varepsilon, \delta)$-viability constraint. This is a deliberate design choice for tractability: $ \mathcal{U}_{\rm v}(n;\eta)$ is a deterministic convex set, preserving the convexity of the overall problem. In contrast, distributionally robust chance constraints typically induce non-convex feasible sets, which would render the problem computationally intractable. In addition, for logarithmic and other utilities with singular behavior at zero, enforcing $\Phi_n(u,x)\ge\eta>0 $ uniformly over $ \mathfrak{X}_n$ ensures that the expected utility is finite under all distributions in the ambiguity set, thereby guaranteeing well-posedness of the objective. Relaxing to a distributional chance constraint would require restricting to utilities bounded below, which excludes the classical log utility case.
\end{remark}

\begin{remark}[Reduction to Sample Average Approximation] \rm
	If the ambiguity radius is set to $\varepsilon = 0$, the set $\mathcal{B}_0^{(p)}(\widehat{\mathbb{F}}_n)$ collapses to the singleton empirical distribution~$\widehat{\mathbb{F}}_n$. Problem~\eqref{eq: general_DRO_formulation} then reduces to the Sample Average Approximation (SAA) of the risk-sensitive control problem:
	\[
		\max_{n \in \mathcal N}\ \max_{u \in  \mathcal U_{\rm v}(n; \eta) }\ \frac{1}{n}\frac{1}{N_n}\sum_{j=1}^{N_n} U\!\left(\Phi_n\!\left(u, \widehat{\mathcal X}_n^{(j)}\right)\right).
	\]
	This recovers standard expected-utility maximization but retains the hard viability constraint essential for validity.
\end{remark}

\section{Theoretical Results} \label{section: Main Results}
This section demonstrates that the infinite-dimensional distributionally robust control problem~\ref{problem: general DRO problem} can be approximated by a finite  dimensional convex program with a concave objective, thereby facilitating computational tractability.

\subsection{Tractable Formulation}
Although our dual derivation shares ingredients with~\cite{mohajerin2018data}, the polarity of optimization is reversed: we solve a max-min risk-sensitive problem where the objective is concave in the decision variable. Consequently, Sion’s minimax theorem cannot be directly applied to establish strong duality. We instead utilize a general minimax inequality to derive a rigorous lower bound.

\begin{theorem}[Tractable Convex Relaxation for Fixed Sampling Period $n$] \label{theorem:convex reduction} 
	Fix a sampling period $n \in \mathcal{N}$ and let $\varepsilon_n \ge 0$ be the calibrated ambiguity radius defined in Definition~\ref{def: calibrated_radius}. Let $p \in [1, \infty)$ and $q= \frac{p}{p-1}$ with $q=\infty$ if $p=1$. A tractable lower bound is given by the optimal value of the following convex program:
	\begin{align} \label{problem:DRO-ELG dual}
		J_{\rm  cvx}^*(n) := &  \sup_{u, \lambda, s_j, z_j} \frac{1}{n} \left(- \lambda\varepsilon_n^p + \frac{1}{N_n} \sum_{j=1}^{N_n} {s}_j \right)   \\
		\text{\rm s.t. }  &
		{ \min_{ x \in \mathfrak{X}_n} \left[ U(\Phi_n(u, x)) + z_j^\top ( x -\widehat{\mathcal{X}}_n^{(j)} ) \right] - \Omega_p(z_j, \lambda) \geq {s}_j, \quad \forall j=1, \dots, N_n, } \notag \\
		& \lambda \geq 0, \notag\\
		& u \in  \mathcal{U}_{\rm v}(n;\eta). \notag
	\end{align}
	where the regularization term $\Omega_p(z_j, \lambda)$ is defined as:
	$$
	\Omega_p(z_j, \lambda) := 
	\begin{cases} 
		\frac{1}{q}(p\lambda)^{1-q} \|z_j\|_*^q & \text{if } p > 1, \\ 0 & \text{if } p=1, \, \|z_j\|_* \le \lambda, \\ \infty & \text{otherwise},
	\end{cases}
	$$
	where $z_j\in\mathbb R^d$ are dual variables and $\|\cdot\|_*$ is the dual norm associated with the original norm $\|\cdot\|$ used in the Wasserstein metric. 	For $p>1$, we adopt the extended-value convention at $\lambda=0$: $\Omega_p(0,0)=0$ and $\Omega_p(z,0)=+\infty$ for $z\neq 0$.
\end{theorem}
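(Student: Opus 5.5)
We show that every feasible point of \eqref{problem:DRO-ELG dual} certifies a lower bound on the inner worst-case value. Concretely, we aim to prove that for each fixed $u\in\mathcal U_{\rm v}(n;\eta)$ and each feasible $(\lambda,s,z)$,
\[
\inf_{\mathbb F\in\mathcal B^{(p)}_{\varepsilon_n}(\widehat{\mathbb F}_n)} \mathbb E^{\mathbb F}[U(\Phi_n(u,\mathcal X))]\ \ge\ -\lambda\varepsilon_n^p+\frac1{N_n}\sum_j s_j .
\]
Taking the supremum over $(u,\lambda,s,z)$ and dividing by $n$ then gives $J^*_{\rm cvx}(n)\le \max_{u\in\mathcal U_{\rm v}(n;\eta)}\inf_{\mathbb F}\frac1n\mathbb E^{\mathbb F}[\cdot]$. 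Convexity of the program is checked separately at the end.

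\textbf{Step 1: weak Lagrangian duality over couplings.} Fix $u$ and abbreviate $r(x):=U(\Phi_n(u,x))$, which is finite and continuous on the compact set $\mathfrak X_n$ by robust viability (Lemma~\ref{lemma: robust viability condition}). Any $\mathbb F$ in the ball admits an optimal coupling with $\widehat{\mathbb F}_n$, since optimal couplings exist on compact sets. Disintegrating this coupling gives conditional laws $\mathbb Q_j$ on $\mathfrak X_n$ with $\mathbb F=\frac1{N_n}\sum_j\mathbb Q_j$ and $\frac1{N_n}\sum_j\int\|x-\widehat{\mathcal X}^{(j)}_n\|^p\,d\mathbb Q_j\le\varepsilon_n^p$. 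For $\lambda\ge0$, adding the nonnegative quantity $\lambda(\varepsilon_n^p-\text{cost})$ yields
\[
\mathbb E^{\mathbb F}[r]\ \ge\ -\lambda\varepsilon_n^p+\frac1{N_n}\sum_j\int \big(r(x)+\lambda\|x-\widehat{\mathcal X}^{(j)}_n\|^p\big)\,d\mathbb Q_j(x)
\ \ge\ -\lambda\varepsilon_n^p+\frac1{N_n}\sum_j\min_{x\in\mathfrak X_n}\big[r(x)+\lambda\|x-\widehat{\mathcal X}^{(j)}_n\|^p\big].
\]
This step uses only weak duality and needs no convexity.

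\textbf{Step 2: the minimax interchange, which is the key step.} Write $\xi=x-\widehat{\mathcal X}^{(j)}_n$. By conjugacy of the power of a norm,
\[
\lambda\|\xi\|^p=\sup_{z\in\mathbb R^d}\big\{z^\top\xi-\Omega_p(z,\lambda)\big\}.
\]
For $p>1$ this follows from a scalar optimization combined with $\sup_{\|w\|_*\le\|z\|_*}$ duality, giving the coefficient $\frac1q(p\lambda)^{1-q}$. For $p=1$ the conjugate is the indicator of $\{\|z\|_*\le\lambda\}$. The case $\lambda=0$ is covered by the stated convention.

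Substituting this into the inner minimum gives $\min_x\sup_z[\cdots]$. We then apply the general inequality $\min_x\sup_z\ge\sup_z\min_x$, and the reverse inequality is \emph{not} needed. The concave-max geometry is precisely what blocks Sion's theorem here: the bracket is concave in $x$, whereas Sion's theorem would need convexity in $x$. So we only keep
\[
\min_{x}\big[r(x)+\lambda\|\xi\|^p\big]\ \ge\ \min_x\big[r(x)+z_j^\top\xi\big]-\Omega_p(z_j,\lambda)\quad\text{for every } z_j .
\]
The only care required is that $\Omega_p$ is finite exactly on the feasible $z_j$, and the constraint of \eqref{problem:DRO-ELG dual} then says this right-hand side is at least $s_j$. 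Chaining this with Step~1 yields the claimed inequality, uniformly over all $\mathbb F$ in the ball.

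\textbf{Step 3: convexity.} Since the program is a maximization, we need a concave objective and a convex feasible set. The objective is linear. For each $x$, the map $(u,z_j)\mapsto r_n(u,x)+z_j^\top(x-\widehat{\mathcal X}^{(j)}_n)$ is jointly concave, because it is concave in $u$ by (A2) and affine in $z_j$. Its minimum over $x$ is therefore jointly concave. The function $\Omega_p$ is jointly convex in $(z,\lambda)$: for $p>1$ it is, up to a constant, the perspective $\lambda^{1-q}\|z\|_*^q$ of the convex function $\|z\|_*^q$, and for $p=1$ it is the indicator of a convex cone. The constraint left-hand sides are thus concave, so their superlevel sets are convex. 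Finally, $\mathcal U_{\rm v}(n;\eta)$ is convex by the remark following \eqref{ineq: robustly viable condition min form}.
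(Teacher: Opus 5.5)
Your proposal is correct. It follows the paper's skeleton: disintegrating the coupling against the discrete $\widehat{\mathbb F}_n$, a multiplier $\lambda$ on the transport budget, the conjugate representation of $\lambda\|\cdot\|^p$ through $\Omega_p$, the $\inf\sup\ge\sup\inf$ interchange, and the perspective argument for convexity. It differs from the paper in one substantive place.

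The paper asserts an \emph{equality} between the worst-case expectation and $\sup_{\lambda\ge 0}\{-\lambda\varepsilon_n^p+\frac{1}{N_n}\sum_j\inf_{x}[U(\Phi_n(u,x))+\lambda\|x-\widehat{\mathcal X}_n^{(j)}\|^p]\}$. For this it invokes strong duality from the Wasserstein DRO literature, using the Slater point $\widehat{\mathbb F}_n$ when $\varepsilon_n>0$ and handling $\varepsilon_n=0$ separately. You instead use only weak duality on an optimal coupling of each $\mathbb F$ in the ball. In Step~2 you likewise use only the Fenchel--Young direction $\lambda\|\xi\|^p\ge z^\top\xi-\Omega_p(z,\lambda)$.

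Since the theorem claims only a lower bound, your feasible-point certificate suffices. It is self-contained, needs no case split in $\varepsilon_n$, and directly yields the per-$u$ inequality later used in Theorem~\ref{theorem:long_run_utility}.

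What the paper's exact chain buys is that the Lagrangian step loses nothing, so the minimax interchange is the \emph{only} source of conservatism. That is what lets Proposition~\ref{proposition: Minimax Duality Gap Bound}, which is uniform in $\lambda$, be read as a bound on the total relaxation error. Concretely, for each fixed $u$ the worst-case expected utility exceeds the relaxed value by at most $\frac12 L_n\mathfrak D^2$ (before the $1/n$ scaling). Your weak-duality route would need the strong-duality step added back to support that interpretation.

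One minor point, which you share with the paper: you invoke the remark after \eqref{ineq: robustly viable condition min form} for convexity of $\mathcal U_{\rm v}(n;\eta)$. That remark requires concavity of $u\mapsto\Phi_n(u,x)$, which is not part of Assumption~\ref{ass:U_phi}, so it is really an additional hypothesis.
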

\begin{proof}
	Using the Wasserstein distance from Definition~\ref{definition:Wasserstein metric}, we first re-express the inner minimization problem of Problem~\ref{problem: general DRO problem} via Lagrangian dualization. Specifically, we observe that
	\begin{align} \label{problem:the inner minimization problem}
		\inf_{\mathbb{F} \in \mathcal{B}_{\varepsilon_n}^{(p)} ( \widehat{\mathbb{F}}_n)} \mathbb{E}^{\mathbb{F}} \left[ U(\Phi_n(u, \mathcal{X})) \right]
		& = \begin{cases}
			\displaystyle\inf_{\Pi, \mathbb{F}} \displaystyle\int U(\Phi_n(u, \mathcal{X})) \, d\mathbb{F}(\mathcal{X})\\
			\text{s.t. }
				 \Pi \in \Gamma(\mathbb{F},\widehat{\mathbb{F}}_n), \\
				\qquad \int \| \mathcal{X} - \mathcal{X}^{\prime} \|^p \, d\Pi(\mathcal{X}, \mathcal{X}^\prime) \leq \varepsilon_n^p,
		\end{cases}
	\end{align}
	where $\Pi$ is a joint distribution on $\mathfrak{X}_n \times \mathfrak{X}_n$ with marginals $\mathbb{F}$ and $\widehat{\mathbb{F}}_n$.
	Given that the empirical distribution $\widehat{\mathbb{F}}_n$ is discrete, by the standard disintegration theorem, e.g., see \cite{mohajerin2018data}, any coupling $\Pi$ admits a decomposition into conditional distributions~$\mathbb{F}_j$ supported on $\mathfrak{X}_n$, associated with each sample $\widehat{\mathcal{X}}_n^{(j)}$. Consequently, the infinite-dimensional optimization problem \eqref{problem:the inner minimization problem}  over probability measures reduces to a finite sum of subproblems:
	\begingroup
	\allowdisplaybreaks
	{\small \begin{align}
			&\begin{cases}
				\displaystyle \inf_{\mathbb{F}_j \in \mathcal{M}(\mathfrak{X}_n), \, \forall j} \, \frac{1}{N_n}\displaystyle\sum_{j=1}^{N_n} \int U(\Phi_n(u, \mathcal{X})) \, d\mathbb{F}_j(\mathcal{X}) \\
				\text{s.t. } \displaystyle \frac{1}{N_n}\sum_{j=1}^{N_n} \int \| \mathcal{X} - \widehat{\mathcal{X}}_n^{(j)}\|^p \, d\mathbb{F}_j(\mathcal{X}) \leq \varepsilon_n^p
			\end{cases} \notag \\
			&  = \sup_{\lambda \geq 0}\; \inf_{\mathbb{F}_j \in \mathcal{M}(\mathfrak{X}_n), \, \forall j} \left\{-\lambda \varepsilon_n^p + \frac{1}{N_n} \sum_{j=1}^{N_n} \int \left[ U(\Phi_n(u, \mathcal{X})) + \lambda \| \mathcal{X} - \widehat{\mathcal{X}}_n^{(j)}\|^p \right] d{\mathbb{F}_j} \right\}  \label{eq:strong_duality_step}\\
			&  = \sup_{\lambda \geq 0} \left\{ -\lambda \varepsilon_n^p + \frac{1}{N_n} \sum_{j=1}^{N_n} \inf_{ x \in \mathfrak{X}_n} \left[ U(\Phi_n(u, x)) + \lambda \| x - \widehat{\mathcal{X}}_n^{(j)}\|^p \right] \right\}, \label{eq:reduction_to_pointwise}
		\end{align}
	}\endgroup
	where Equality~\eqref{eq:strong_duality_step}  holds as follows: If $\varepsilon_n>0$, the primal problem admits a strictly feasible point $\widehat{\mathbb F}_n$ with
	$W_p(\widehat{\mathbb F}_n,\widehat{\mathbb F}_n) = 0 < \varepsilon_n$, hence strong duality follows; see \cite{mohajerin2018data, gao2023distributionally}.
	If $\varepsilon_n=0$, then $\mathcal{B}_{\varepsilon_n}^{(p)}(\widehat{\mathbb{F}}_n)=\{\widehat{\mathbb F}_n\}$ and the reduction follows directly. 

	Introducing epigraphical auxiliary variables ${s}_j$ for~$j = 1, \dots, N_n$, we rewrite the problem as:
	\begin{align}
		&\begin{cases}
			\displaystyle\sup_{\lambda, {s}_j} \,  -\lambda \varepsilon_n^p + \frac{1}{N_n}  \sum_{j=1}^{N_n} {s}_j \\
			\text{s.t. } \displaystyle\inf_{ x \in \mathfrak{X}_n} \left[ U(\Phi_n(u, x)) + \lambda  \| x - \widehat{\mathcal{X}}_n^{(j)}\|^p   \right] \geq {s}_j, \quad \forall j \\
			\qquad \lambda \geq 0 \notag
		\end{cases} \\
		& = \begin{cases}
			\label{eq: double dual norm representation}
			\displaystyle\sup_{\lambda, {s}_j} \, -\lambda\varepsilon_n^p + \frac{1}{N_n}  \sum_{j=1}^{N_n} {s}_j \\
			\text{s.t. } { \displaystyle \inf_{ x \in \mathfrak{X}_n} \left[ U(\Phi_n(u, x)) + \sup_{z_j} \left\{ z_j^\top \left( x - \widehat{\mathcal{X}}_n^{(j)} \right) - \Omega_p(z_j, \lambda) \right\} \right] \geq {s}_j , \quad \forall j } \\
			\qquad \lambda \geq 0
		\end{cases}
	\end{align}
	where the regularization term $\Omega_p(z_j, \lambda)$ satisfies
	$$
	\Omega_p(z_j, \lambda) := 
	\begin{cases} 
		\frac{1}{q}(p\lambda)^{1-q} \|z_j\|_*^q & \text{if } p > 1, \\ 0 & \text{if } p=1, \, \|z_j\|_* \le \lambda, \\ \infty & \text{otherwise}
	\end{cases}
	$$	which is derived from the scaled convex conjugate of the power norm function.
	Here, the last equality~\eqref{eq: double dual norm representation} holds by using the biconjugate identity $\lambda \|x - \widehat{\mathcal{X}}_j\|^p = \sup_{z_j} \{ z_j^\top ( x - \widehat{\mathcal{X}}_j) - \Omega_p(z_j, \lambda)\}$, which is a direct application of the Fenchel–Moreau Theorem, e.g., see \cite{beck2017first}.  
	Hence, we rewrite~\eqref{eq: double dual norm representation} further as follows: 
	\begingroup
	\allowdisplaybreaks
	\begin{align}
		&\begin{cases} 
			\displaystyle\sup_{\lambda, {s}_j}  \, -\lambda\varepsilon_n^p + \frac{1}{N_n}  \sum_{j=1}^{N_n}  {s}_j \\
			\text{s.t. }  \displaystyle \inf_{ x \in \mathfrak{X}_n }  \;
			\displaystyle \sup_{ z_j} \left[ U(\Phi_n(u, x))  +  z_j^\top ( x - \widehat{\mathcal{X}}_n^{(j)})- \Omega_p(z_j, \lambda) \right] \geq  {s}_j , \quad \forall j  \\
			\qquad \lambda \geq 0   \notag
		\end{cases}\\
		&	\geq \begin{cases} \label{ineq:turn into min_sup}
			\displaystyle\sup_{\lambda, {s}_j} \, -\lambda\varepsilon_n^p + \frac{1}{N_n}  \sum_{j=1}^{N_n}  {s}_j \\
			\text{s.t. } { { 
					\displaystyle \sup_{ z_j} \; \inf_{ x \in  \mathfrak{X}_n }   \left[ U(\Phi_n(u, x))  +  z_j^\top ( x - \widehat{\mathcal{X}}_n^{(j)}) - \Omega_p(z_j, \lambda)  \right] \geq  {s}_j , \quad \forall j} } \\
			\qquad \lambda \geq 0
		\end{cases} \\
		& = \begin{cases}
			\displaystyle \sup_{\lambda, {s}_j,z_j} \, -\lambda\varepsilon_n^p + \frac{1}{N_n}   \sum_{j=1}^{N_n}  {s}_j \\
			\text{s.t. }   \
			\displaystyle \inf_{x \in  \mathfrak{X}_n } \left[ U(\Phi_n(u, x))  + z_j^\top ( x  - \widehat{\mathcal{X}}_n^{(j)})   \right]- \Omega_p(z_j, \lambda) \geq  {s}_j, \quad \forall j \\
			\qquad \lambda \geq 0 \\
		\end{cases}
	\end{align}
	\endgroup
	where Inequality~\eqref{ineq:turn into min_sup} follows from the general minimax inequality ($\inf_{\mathcal{X}} \sup_{z_j} [\cdot] \ge \sup_{z_j} \inf_{\mathcal{X}} [\cdot]$) applied to the constraint's core term.
	Note that strict equality in~\eqref{ineq:turn into min_sup} is not guaranteed because the objective is concave in $x$ (by Assumption \ref{ass:U_phi}), failing the convexity requirement for Sion's theorem in the minimization variable. Thus, enforcing the stronger condition (the last system above) yields a valid lower bound.
	
	Since $\mathfrak{X}_n$ is compact and the objective is continuous, the Weierstrass Extreme Value Theorem indicates that the infimum is attained. Substituting this back into the maximization problem over $u$ yields Problem~\eqref{problem:DRO-ELG dual}.  Since the feasible set is closed and the domain $\mathcal{U}_{\rm v}(n;\eta)$ is compact, the existence of an optimal solution is guaranteed.
	
	To complete the proof, it remains to show that problem above is a convex program. Note that the objective function is linear in $\lambda, s_j$. The set $ \mathcal{U}_{\rm v}(n;\eta)$ is convex. The constraint
	$$
	G_j(u, z_j, \lambda) := \min_{x \in \mathfrak{X}_n} \left[ U(\Phi_n(u, x)) + z_j^\top ( x - \widehat{\mathcal{X}}_n^{(j)}) \right] - \Omega_p(z_j, \lambda)  \geq s_j
	$$
	defines a convex feasible set. The first term is a pointwise minimum of functions concave in $(u, z_j)$, which is concave.  (since $U(\Phi_n(\cdot, x))$ is concave by Assumption \ref{ass:U_phi} and the term linear in $z_j$ is concave, and regularization term $ \Omega_p(z_j, \lambda) \propto  \tfrac{ \|z\|_*^q }{ \lambda^{q-1}} $ is the perspective function of the convex power function $g(z) = \|z\|_*^q$. Since perspective functions preserve convexity, $-\Omega_p(z_j, \lambda) $ is concave). The pointwise minimum of concave functions is concave. Thus, $G_j$ is a sum of concave functions, and the superlevel set condition~$G_j(u, z_j) \ge s_j$ defines a convex set.  For $p=1$, the constraint reduces to $\|z_j\|_* \le \lambda$, which is also convex.
	Moreover, the constraint~$\lambda \geq 0$ is convex. Since the intersection of two convex sets preserves convexity, the overall problem is a convex program.
\end{proof}

\begin{remark}[On the Duality and the Resulting Gap] \rm
	$(i)$  The derivation of the tractable formulation in Theorem~\ref{theorem:convex reduction} involves a critical minimax interchange (see Inequality~\eqref{ineq:turn into min_sup}).  Because the utility $U(\Phi_n(u, x))$ is concave in the minimization variable~$x$ (under Assumption~\ref{ass:U_phi}), the convexity conditions for Sion's Minimax Theorem are not met, precluding an exact equality. Consequently, the use of the general minimax inequality is necessary, and the formulation in Theorem~\ref{theorem:convex reduction} furnishes a rigorous \emph{lower bound} on the true optimal value. The magnitude of this duality gap is intrinsically linked to the degree of smoothness of stage reward $x\mapsto U(\Phi_n(u,x))$ over the support set~$\mathfrak{X}_n$, uniformly over $u$. 
	$(ii)$ The convex relaxation in Theorem~\ref{theorem:convex reduction} is \emph{exact} whenever the stage reward $x \mapsto r_n(u; x)$ is affine on $\mathfrak{X}_n$; see Proposition~\ref{proposition: Minimax Duality Gap Bound}. 
\end{remark}

The next corollary indicates that the semi-infinite convex relaxation above can be further expressed in a finite convex formulation.

\begin{corollary}[Reduction of Semi-Infinite Constraint] \label{corollary: Reduction of Semi-Infinite Constraint}
	Fix $p \in [1, \infty)$, $n \in \mathcal{N}$, and let $\varepsilon_n$ be as in Definition~\ref{def: calibrated_radius}. The convex approximation problem~\eqref{problem:DRO-ELG dual} is equivalent to the following optimization problem:
	\begin{align} 
		&\sup_{u, \lambda, s_j, z_j}\, \frac{1}{n}\, \left(- \lambda\varepsilon_n^p + \frac{1}{N_n} \sum_{j=1}^{N_n}  {s}_j \right) \notag \\ 
		&\text{s.t. }  \
		\min_{ x \in \operatorname{Ext}( \operatorname{conv} ( \mathfrak{X}_n ) ) } \left[ U(\Phi_n(u, x))  +  z_j^\top ( x - \widehat{\mathcal{X}}_n^{(j)} ) \right] -\Omega_p(z_j,\lambda) \geq  {s}_j, \quad \forall j, \label{eq:ext_constraint}\\
		& \qquad \lambda \geq 0, \notag\\ 
		& \qquad u \in  \mathcal{U}_{\rm v}(n;\eta), \notag
	\end{align}
	where $\operatorname{Ext}(\operatorname{conv} ( \mathfrak{X}_n )  )$ is the set of extreme points of the convex hull of the compact support~$\mathfrak{X}_n$.
	Moreover, if the support set $\mathfrak{X}_n$ is a convex polytope, then $\operatorname{Ext}(\operatorname{conv} ( \mathfrak{X}_n )  ) = \operatorname{Ext}(\mathfrak{X}_n)$ is a finite set of vertices, and the problem reduces to a finite-dimensional convex programming problem.
\end{corollary}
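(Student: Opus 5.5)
The plan is to show that, for each fixed $(u,\lambda,z_j)$, the constraint value in \eqref{problem:DRO-ELG dual} coincides with that in \eqref{eq:ext_constraint}. If so, the feasible sets and objectives of the two programs are identical, and the problems are equivalent.

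Fix $u\in\mathcal U_{\rm v}(n;\eta)$, $\lambda\ge 0$ and $z_j\in\mathbb R^d$, and define $h_j(x):=U(\Phi_n(u,x))+z_j^\top(x-\widehat{\mathcal X}_n^{(j)})$. By Assumption~\ref{ass:U_phi}(A2), $x\mapsto r_n(u,x)$ is concave on $\operatorname{conv}(\mathfrak X_n)$. The second term is affine, so $h_j$ is concave and continuous on the compact convex set $C:=\operatorname{conv}(\mathfrak X_n)$; compactness of $C$ follows from compactness of $\mathfrak X_n$ in finite dimensions. Since $\Omega_p(z_j,\lambda)$ does not depend on $x$, it suffices to prove
\[
\min_{x\in\mathfrak X_n} h_j(x)=\min_{x\in\operatorname{Ext}(C)} h_j(x).
\]

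The argument uses three inclusions and one minimum principle. First, $\operatorname{Ext}(C)\subseteq\mathfrak X_n$. This is the standard fact that every extreme point of the convex hull of a set lies in the set: any point of $C$ is a finite convex combination of points of $\mathfrak X_n$, and an extreme point cannot be a nontrivial such combination. Second, $\mathfrak X_n\subseteq C$. Together these give
\[
\min_{C} h_j \le \min_{\mathfrak X_n} h_j \le \min_{\operatorname{Ext}(C)} h_j.
\]
To close the chain, I would invoke the Krein--Milman/Carathéodory (Minkowski) theorem: every $x\in C$ can be written as $x=\sum_i\theta_i e_i$ with $e_i\in\operatorname{Ext}(C)$ and $\theta$ in the simplex. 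Concavity then gives
\[
h_j(x)\ge\sum_i\theta_i h_j(e_i)\ge\min_{e\in\operatorname{Ext}(C)}h_j(e).
\]
This yields $\min_C h_j\ge\inf_{\operatorname{Ext}(C)}h_j$, so all three quantities are equal.

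The delicate point is attainment on $\operatorname{Ext}(C)$, which need not be closed in general. I would handle it by noting that the argument above shows $\inf_{\operatorname{Ext}(C)}h_j=\min_{\mathfrak X_n}h_j$. The latter is attained by Weierstrass, and the infimum is attained at an extreme point: take a minimizer $x^\star\in C$ and its convex representation; all $e_i$ with $\theta_i>0$ must satisfy $h_j(e_i)=h_j(x^\star)$. So the "min" in \eqref{eq:ext_constraint} is legitimate.

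For the polytope case, if $\mathfrak X_n$ is a convex polytope then $\operatorname{conv}(\mathfrak X_n)=\mathfrak X_n$, and its extreme points are its finitely many vertices $v_1,\dots,v_K$. Each semi-infinite constraint then splits into the $K$ constraints $h_j(v_k)-\Omega_p(z_j,\lambda)\ge s_j$. Each of these is jointly concave in $(u,z_j,\lambda)$ minus $s_j$, by the concavity in $u$ from (A2) and the perspective argument already used in the proof of Theorem~\ref{theorem:convex reduction}. The result is a finite convex program with $N_nK$ constraints.
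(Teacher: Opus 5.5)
Your proposal is correct and takes essentially the same route as the paper: fix $(u,\lambda,z_j)$, use that $\Omega_p$ does not depend on $x$, and combine the sandwich $\operatorname{Ext}(\operatorname{conv}(\mathfrak{X}_n))\subseteq\mathfrak{X}_n\subseteq\operatorname{conv}(\mathfrak{X}_n)$ with the minimum principle for concave functions. The one difference is that the paper cites this minimum principle (Bertsekas, Prop.~2.4.1), whereas you prove it via the Minkowski--Carath\'eodory representation; this also settles attainment on the convex hull more carefully than the paper does, and it does not need continuity of $h_j$ off $\mathfrak{X}_n$ (which you assert but never actually use).
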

\begin{proof}
	Fix $u, z_j$, and a sample index $j$. Define an auxiliary function:
	$$
	\psi_j (x) := U(\Phi_n(u, x)) + z_j^\top ( x - \widehat{\mathcal{X}}_n^{(j)} ).
	$$
	This is a continuous concave function over the compact set $\mathfrak{X}_n$. Hence, the minimum is attained by the Weierstrass Extremum Theorem.
	
	Notably, since $\mathfrak{X}_n$ is compact, its convex hull $C:=\operatorname{conv}(\mathfrak{X}_n)$ is a compact convex set. 
	We invoke the fundamental result in convex analysis that a concave function $\psi_j$ attaining a minimum over $C$ attains that minimum at one of its extreme points, see \cite[Proposition 2.4.1]{bertsekas2009convex}. 
	Therefore, we have
	\begin{align} \label{eq: min of concave function over convex domain}
		\min_{ x \in \operatorname{conv}(\mathfrak{X}_n)} \psi_j(x) = \min_{ x \in \operatorname{Ext}(\operatorname{conv}(\mathfrak{X}_n))} \psi_j(x).
	\end{align}
	Furthermore, since $\operatorname{Ext}(\operatorname{conv}(\mathfrak{X}_n)) \subseteq \mathfrak{X}_n \subseteq \operatorname{conv}(\mathfrak{X}_n)$, the minimum over the convex hull is equivalent to the minimum over the original set $\mathfrak{X}_n$, establishing the desired~result:
	\begin{align} \label{eq: min of concave function over convex domain }
		\min_{ x \in \mathfrak{X}_n} \psi_j(x) 
		= \min_{ x \in \operatorname{Ext}(\operatorname{conv}(\mathfrak{X}_n))} \psi_j(x),
	\end{align}
 Therefore, \eqref{eq: min of concave function over convex domain } implies 
	$
	\min_{ x \in \mathfrak{X}_n} \psi_j(x) - \Omega_p(z_j, \lambda)= \min_{ x \in \operatorname{Ext}(\operatorname{conv} (\mathfrak{X}_n))} \psi_j(x)- \Omega_p(z_j, \lambda),
	$ 
	since $\Omega_p(\cdot)$ is $x$-independent.
	Consequently, the semi-infinite constraint $\min_{x} \psi_j(x) -\Omega_p(z_j, \lambda) \geq s_j$ is satisfied if and only if it holds for all $v \in \operatorname{Ext}( \operatorname{conv}(\mathfrak{X}_n))$. If $\mathfrak{X}_n$ is a polytope, $|\operatorname{Ext}(\mathfrak{X}_n)| < \infty$, ensuring a finite number of constraints.
\end{proof}

\begin{remark}[Computational Complexity]\rm
	Corollary~\ref{corollary: Reduction of Semi-Infinite Constraint} reduces the semi-infinite constraint
	indexed by $\mathfrak{X}_n$ to an equivalent constraint indexed by the extreme-point set
	$\operatorname{Ext}( \operatorname{conv}( \mathfrak{X}_n))$. In general, $\operatorname{Ext}( \operatorname{conv}( \mathfrak{X}_n))$ may be infinite (e.g., when $\mathfrak{X}_n$ is strictly convex), and the resulting formulation remains semi-infinite.
	If, however, the $\operatorname{conv}(\mathfrak{X}_n)$ is a polytope, then $\operatorname{Ext}( \operatorname{conv}( \mathfrak{X}_n))$ is a finite vertex set. In this case, the number of constraints scales as $N_n \times |\operatorname{Ext}(\mathfrak X_n)|$, which can still
	grow exponentially with the disturbance dimension $d$ (e.g., for a hypercube, $|\operatorname{Ext}(\mathfrak X_n)|=2^d$).
	When $|\operatorname{Ext}(\mathfrak X_n)|$ is large, a cutting-plane method (see Appendix~\ref{appendix: cutting_plane})
	can be used to solve the problem by iteratively adding only violated extreme-point constraints.
\end{remark}

\subsection{Theoretical Guarantees and Minimax Duality Gap Analysis} \label{subsection:duality_gap_analysis}
The tractable formulation in Theorem~\ref{theorem:convex reduction} provides a computable \emph{lower bound} on the true optimal value. This bound arises from the use of the general minimax inequality~\eqref{ineq:turn into min_sup}, necessitated by  the concavity of the utility function in the disturbance variable~$x$. Consequently, the practical effectiveness of the formulation is governed by the magnitude of the resulting minimax duality gap.

Theoretically, this gap is driven by the smoothness (non-linearity) of the utility function over the support set~$\mathfrak{X}_n$. The following proposition formalizes this by providing an explicit, computable bound based on the smoothness constant $L_n$ introduced in Assumption~\ref{ass:U_phi}.

\begin{proposition}[Minimax Duality Gap Bound] \label{proposition: Minimax Duality Gap Bound} 
	Fix $u \in  \mathcal{U}_{\rm v}(n;\eta)$ and $\lambda \geq 0$. Let $\mathfrak{D} := \sup_{x, y \in \mathfrak{X}_n} \|x - y\|$ be the diameter of the disturbance support set under the ground $\ell_r$-norm $\|\cdot\|$. For any order $p \in [1, \infty)$, the minimax duality gap for each data sample $j$, defined as
	\begin{align*}
	\Delta_j(u, \lambda) 
	&:= \inf_{ x \in \mathfrak{X}_n } \left[ U(\Phi_n(u, x))  + \lambda \| x - \widehat{\mathcal{X}}_n^{(j)}\|^p \right] \\
	&\qquad - \sup_{z_j \in \mathbb{R}^d} \; \inf_{ x \in \mathfrak{X}_n } \left[ U(\Phi_n(u, x))  + z_j^\top(x - \widehat{\mathcal{X}}_n^{(j)}) - \Omega_p(z_j, \lambda) \right], 
\end{align*}
	is bounded from above by
	$
	\Delta_j(u, \lambda) \le \frac{1}{2} L_n \mathfrak{D}^2,
	$
	where $L_n$ is the uniform smoothness bound from Assumption~\ref{ass:U_phi}(A3).
	Moreover, if the composite stage reward map $x \mapsto r_n(u,x)$ is affine on $\mathfrak{X}_n$, then the relaxation is exact, i.e., $\Delta_j =0.$
\end{proposition}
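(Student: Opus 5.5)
The plan is to avoid the minimax interchange entirely. I will identify the $\sup_{z_j}\inf_x$ value exactly as a convex-duality value in which the stage reward is replaced by its convex envelope. The gap then becomes a pure "concavity defect" of $r(x):=r_n(u,x)$, and I will bound that defect with the smoothness constant $L_n$ of Assumption~\ref{ass:U_phi}(A3).

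Fix $u$, $\lambda$ and $j$, and write $\hat x:=\widehat{\mathcal X}_n^{(j)}$ and $h(x):=\lambda\|x-\hat x\|^p$. Let $C:=\operatorname{conv}(\mathfrak X_n)$, and let $\breve r$ be the convex envelope of $r$ on $C$, i.e.\ the largest convex function on $C$ lying below $r$ on $\mathfrak X_n$.

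\textbf{Step 1 (dual side equals the envelope problem).} Two facts are used. First, for every $z$ the envelope satisfies
\[
\inf_{x\in\mathfrak X_n}\bigl[r(x)+z^\top(x-\hat x)\bigr]=\inf_{x\in C}\bigl[\breve r(x)+z^\top(x-\hat x)\bigr],
\]
because adding an affine function commutes with taking convex envelopes, and the infimum of a function over $\mathfrak X_n$ equals the infimum of its envelope over $C$. Second, by the Fenchel--Moreau representation $h(x)=\sup_z\{z^\top(x-\hat x)-\Omega_p(z,\lambda)\}$ already used in Theorem~\ref{theorem:convex reduction}, the function $(x,z)\mapsto \breve r(x)+z^\top(x-\hat x)-\Omega_p(z,\lambda)$ is convex in $x$ on the compact convex set $C$ and concave in $z$. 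Sion's theorem therefore applies to it. Combining the two facts gives
\[
\sup_{z}\inf_{x\in\mathfrak X_n}\bigl[r(x)+z^\top(x-\hat x)-\Omega_p(z,\lambda)\bigr]=\inf_{x\in C}\bigl[\breve r(x)+h(x)\bigr].
\]
The case $\lambda=0$ is handled directly: only $z=0$ is admissible, so both sides reduce to $\min r$.

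\textbf{Step 2 (concavity defect).} Take $x\in C$ and any representation $x=\sum_i\theta_i x_i$ with $x_i\in\mathfrak X_n$. The quadratic lower bound implied by (A3) gives
\[
r(x_i)\ge r(x)+\nabla r(x)^\top(x_i-x)-\tfrac{L_n}{2}\|x_i-x\|^2 .
\]
Averaging with weights $\theta_i$ cancels the linear term, so
\[
\sum_i\theta_i r(x_i)\ge r(x)-\tfrac{L_n}{2}\sum_i\theta_i\|x_i-x\|^2\ge r(x)-\tfrac12 L_n\mathfrak D^2 .
\]
Taking the infimum over representations yields $\breve r(x)\ge r(x)-\tfrac12L_n\mathfrak D^2$ on $C$.

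\textbf{Step 3 (conclude).} By Steps 1--2,
\[
\Delta_j=\inf_{\mathfrak X_n}[r+h]-\inf_{C}[\breve r+h]\le \inf_{\mathfrak X_n}[r+h]-\inf_{C}[r+h]+\tfrac12L_n\mathfrak D^2 .
\]

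\textbf{Main obstacle.} The delicate point is the mismatch between $\mathfrak X_n$ and $C$, in two places.
\begin{itemize}
\item In Step 2, the quadratic lower bound needs $r$ and $\nabla r$ on segments from $x$ to $x_i$, which may leave $\mathfrak X_n$. I would use (A2), which gives concavity on $C$, together with (A3) read on $C$. Equivalently, I would first treat the case $\mathfrak X_n$ convex, which covers the polytopes of Corollary~\ref{corollary: Reduction of Semi-Infinite Constraint}.
\item In Step 3, I need $\inf_{\mathfrak X_n}[r+h]\le\inf_C[r+h]$, which is immediate when $\mathfrak X_n=C$.
\end{itemize}
If $\mathfrak X_n$ is not convex, I would instead lower-bound the dual value by the explicit choice $z=-\nabla r(x^\star)$ at a minimizer $x^\star$ of $\breve r+h$, and check directly which $\Omega_p$ terms cancel.

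\textbf{Exactness.} If $r$ is affine on $\mathfrak X_n$, then $\breve r=r$ on $C$. Step 1 then gives $\Delta_j=\inf_{\mathfrak X_n}[r+h]-\inf_C[r+h]$, which is $0$ whenever $\mathfrak X_n$ is convex. Equivalently, one can take $L_n\downarrow0$ in the bound.
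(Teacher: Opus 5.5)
For convex $\mathfrak X_n$ your argument is correct, and it takes a genuinely different route from the paper's. Convexity matters because then $C=\mathfrak X_n$ and (A3) controls $\nabla r$ on every segment you use.

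The paper never identifies the dual value. It fixes one tangent plane $T_y$ of $r$ and sandwiches $T_y-\sup_{x'}R(x',y)\le r\le T_y$. It then applies Sion to the linearized saddle function, which is affine in $x$, and reads off $\Delta_j\le\sup_{x'}R(x',y)\le\frac12L_n\mathfrak D^2$ from the descent lemma.

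You instead compute the dual value exactly, $D_j=\min_C[\breve r+h]$. This identity holds for every compact $\mathfrak X_n$. Sion also needs $\breve r$ to be lower semicontinuous on $C$; this holds because the graph of $r$ over the compact set $\mathfrak X_n$ is compact, so the epigraph of $\breve r$ is closed. The gap is then a pure concavity defect. Evaluating $r+h$ at a minimizer $x^\star$ of $\breve r+h$ gives $\Delta_j\le r(x^\star)-\breve r(x^\star)\le\max_C(r-\breve r)$. This bound is visibly independent of $\lambda$, $p$ and $\varepsilon_n$. It is never worse than the paper's $\inf_y\sup_{x'}R(x',y)$, since $T_y-\sup_{x'}R(x',y)$ is an affine minorant of $r$. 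The paper's argument is more elementary (one linearization, no envelopes). Yours is sharper and shows exactly where the geometry of the support enters.

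That also shows your fallback for nonconvex $\mathfrak X_n$ cannot work. Step~1 is an equality, so no choice of $z$ improves on $\min_C[\breve r+h]$, and the proposition itself fails there. Take $\mathfrak X_n=\{0,2\}\subset\mathbb R$, $\widehat{\mathcal X}_n^{(j)}=0$, $\mathcal U=[-1,1]$, $\Phi_n(u,x)=10+ux$, $U(t)=t$, $\eta=1$, $u=-1$, $p=2$, $\lambda=1$, so $\Omega_2(z,1)=z^2/4$. Assumptions (A1)--(A3) hold with any $L_n>0$, and $r(x)=10-x$ is affine. Yet $P_j=\min\{10,12\}=10$, while $D_j=10+\sup_z\bigl[\min\{0,2z-2\}-z^2/4\bigr]=10-\tfrac14$ (attained at $z=1$). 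So $\Delta_j=\tfrac14$, which violates the exactness claim, and also the bound $\frac12L_n\mathfrak D^2=2L_n$ whenever $L_n<\tfrac18$.

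The paper's proof silently carries the same restriction. Passing to $\operatorname{conv}(\mathfrak X_n)$ is legitimate for $D_{\rm lin}(y)$, whose inner objective is affine in $x$. It is not legitimate for $P_{\rm lin}(y)=\inf_{x\in\mathfrak X_n}[T_y(x)+h(x)]$, whose objective is convex in $x$. Its descent-lemma step likewise needs segments inside $\mathfrak X_n$.

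So state convexity of $\mathfrak X_n$ as a hypothesis and drop the fallback. A weaker sufficient hypothesis is $\min_{\mathfrak X_n}[r+h]=\min_C[r+h]$ together with (A3) on $C$. With either, your proof is complete.
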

\begin{proof} 
	To prove that the minimax duality gap is bounded from above, we define the primal and dual values associated with the inner variational problem.
	For a fixed $u \in  \mathcal{U}_{\rm v}(n;\eta)$, $\lambda \geq 0$, and data sample $\widehat{\mathcal{X}}_n^{(j)}$, let $f_u(x) := U(\Phi_n(u, x))$. 
	By Assumption~\ref{ass:U_phi}(A2), $f_u$ is concave on $\mathfrak{X}_n$.
	By Assumption~\ref{ass:U_phi}(A3), $f_u$ is differentiable on $\mathfrak{X}_n$ and $\nabla f_u$ is $L_n$-Lipschitz on $\mathfrak X_n$.
	Using the Fenchel representation $\lambda \|v\|^p = \sup_z \{z^\top v - \Omega_p(z, \lambda)\}$ as derived in the proof of Theorem~\ref{theorem:convex reduction},  the primal value of the inner problem as:
	\begin{align} \label{eq: primal value of the inner problem}
		P_j 
		&:= \inf_{x \in \mathfrak{X}_n } \left[ f_u( x ) + \lambda \| x - \widehat{\mathcal{X}}_n^{(j)}\|^p \right] \notag \\
		&= \inf_{ x \in \mathfrak{X}_n } \sup_{z_j \in \mathbb{R}^d} \left[ f_u( x ) + z_j^\top( x - \widehat{\mathcal{X}}_n^{(j)}) - \Omega_p(z_j, \lambda) \right].
	\end{align}
	Define the dual value by exchanging infimum and supremum as:
	\begin{align} \label{eq: dual value of the inner problem}
		D_j := \sup_{z_j \in \mathbb{R}^d} \inf_{x \in \mathfrak{X}_n } \left[ f_u(x) + z_j^\top(x - \widehat{\mathcal{X}}_n^{(j)}) - \Omega_p(z_j, \lambda) \right].
	\end{align}
	The minimax duality gap is the difference, $\Delta_j := P_j - D_j$. The general minimax inequality states that $\inf\sup[\cdot] \ge \sup\inf[\cdot]$, which implies that $\Delta_j \ge 0$. Our remaining task is to find an upper bound for~it.
	
	To upper bound the gap, we linearize the concave function $f_u(x)$. Fix an arbitrary linearization point $y \in \mathfrak{X}_n$. Let $T_y(x) := f_u(y) + \nabla f_u(y)^\top(x-y)$, which represents the tangent hyperplane to $f_u$ at the point $y$.  Since $f_u(x)$ is concave, it lies below its tangent:
	\begin{align} \label{eq:concavity_bound}
		f_u(x) \leq T_y(x), \quad \text{ for all } x \in \mathfrak{X}_n.
	\end{align}
	The remainder $R(x, y) := T_y(x) - f_u(x) \geq 0$. 
	
	Additionally, by Assumption~\ref{ass:U_phi}(A3), gradient $\nabla f_u$ is $L_n$-Lipschitz in the sense that
	$\|\nabla f_u(x)-\nabla f_u(y)\|_* \leq L_n \|x-y\|$ for all $x, y \in \mathfrak{X}_n$.
	Applying the (norm-generalized) descent lemma, e.g., see \cite[Lemma 5.7]{beck2017first} to the convex function $-f_u$ yields the uniform bound
	\begin{equation}\label{eq:remainder_bound}
		0 \leq R(x,y) \leq \frac{1}{2} L_n \|x-y\|^2 \leq \frac{1}{2}L_n \mathfrak{D}^2,
		\qquad \text{for all } x, y \in \mathfrak{X}_n,
	\end{equation}
	where $\mathfrak{D}:=\sup_{x,y\in\mathfrak X_n}\|x-y\|$.
	Now, substituting $f_u(x) = T_y(x) - R(x, y)$ into the expressions for $P_j$ and $D_j$, we have the following bounds:
	
	\emph{Bound the Primal:} Using \eqref{eq:concavity_bound}, we substitute $T_y$ for $f_u$:
	\begin{align*}
		P_j 
		&= \inf_{ x \in \mathfrak{X}_n } \sup_{z_j \in \mathbb{R}^d} \left[ f_u( x ) + z_j^\top( x - \widehat{\mathcal{X}}_n^{(j)}) - \Omega_p(z_j, \lambda) \right]\\
		&\leq \inf_{ x \in \mathfrak{X}_n} \sup_{z_j \in \mathbb{R}^d} \left[ T_y(x) +z_j^\top( x - \widehat{\mathcal{X}}_n^{(j)}) - \Omega_p(z_j, \lambda) \right] =: P_{\rm lin}(y).
	\end{align*}
	\emph{Bound the Dual:} Since $\Omega_p(z_j, \lambda)$ is independent of $x$, the remainder term separates:
	\begin{align*}
		D_j
		&= \sup_{z_j \in \mathbb{R}^d} \inf_{x \in \mathfrak{X}_n} \left[ T_y(x) - R(x, y) + z_j^\top(x - \widehat{\mathcal{X}}_n^{(j)})- \Omega_p(z_j, \lambda) \right] \\
		&\ge \sup_{z_j \in \mathbb{R}^d} \left( \inf_{x \in \mathfrak{X}_n} \left[ T_y(x) + z_j^\top(x - \widehat{\mathcal{X}}_n^{(j)}) - \Omega_p(z_j, \lambda) \right] - \sup_{x' \in \mathfrak{X}_n} R(x', y) \right) \\
		&=\sup_{z_j \in \mathbb{R}^d}  \left( \inf_{x \in \mathfrak{X}_n} \left[ T_y(x) + z_j^\top(x - \widehat{\mathcal{X}}_n^{(j)}) \right] - \Omega_p(z_j, \lambda)\right) - \sup_{x' \in \mathfrak{X}_n} R(x', y) \\
		&:= D_{\rm lin}(y) - \sup_{x' \in \mathfrak{X}_n} R(x', y),
	\end{align*}
	where $D_{\rm lin}(y) := \sup_{z_j \in \mathbb{R}^d}    \inf_{x \in \mathfrak{X}_n} \left[ T_y(x) + z_j^\top(x - \widehat{\mathcal{X}}_n^{(j)}) - \Omega_p(z_j, \lambda)\right] $.
	
	\emph{Bound the Gap:}
	The terms $P_{\rm lin}(y)$ and $D_{\rm lin}(y)$ represent the primal and dual of a minimax problem involving a function $\mathfrak{L}(x, z_j) = T_y(x) + z_j^\top(x - \widehat{\mathcal{X}}_n^{(j)}) - \Omega_p(z_j, \lambda)$. Note that $\mathfrak{L}(x, z_j)$ is affine in $x$ and {concave} in $z_j$.
Since the extrema of an affine function over a compact set $\mathfrak{X}_n$ coincide with those over its convex hull $\operatorname{conv}(\mathfrak{X}_n)$, we may consider the problem on the domain $\operatorname{conv}(\mathfrak{X}_n)$ without changing the optimal values. As $\operatorname{conv}(\mathfrak{X}_n)$ is compact and convex, Sion's Minimax Theorem applies to this extension, implying $P_{\rm lin}(y) = D_{\rm lin}(y)$.
	
	Thus, we have
	\begin{align} \label{ineq: delta_bound}
			\Delta_j = P_j - D_j \leq P_{\rm lin}(y) - \left( D_{\rm lin}(y) - \sup_{x' \in \mathfrak{X}_n} R(x', y) \right) = \sup_{x' \in \mathfrak{X}_n} R(x', y).
	\end{align}
	Using the uniform bound from \eqref{eq:remainder_bound}, we obtain $\Delta_j \le \frac{1}{2} L_n \mathfrak{D}^2$.
	
	To complete the proof, consider the case where the stage reward $f_u(x)$ is affine in $x$, i.e., $f_u(x) := a(u)^\top x + b(u)$. Then, for any linearization point $y$, the remainder term is exact. That is,
	\begin{align*}
		R(x,y) 
		= T_y(x) - f_u(x)  
		&= f_u(y) + a(u)^\top (x-y) - f_u(x) \\
&= \left[ a(u)^\top y + b(u) \right]  +a(u)^\top (x-y) - \left[ a(u)^\top x + b(u) \right] \\
		& = a(u)^\top y  +  a(u)^\top (x-y) - a(u)^\top x  =0.
	\end{align*}
	Substituting this into the bound~\eqref{ineq: delta_bound}, we conclude $\Delta_j \leq \sup_{x \in \mathfrak{X}_n} R(x, y) = 0.$
\end{proof}

\begin{remark}[On the Generality of the Duality Gap Bound] \rm
	Note that the bound in Proposition~\ref{proposition: Minimax Duality Gap Bound} is independent of the Wasserstein order $p$ and the radius $\varepsilon_n$;  it depends only on the smoothness constant $L_n$ and the diameter~$\mathfrak{D}$ of~$\mathfrak{X}_n$.
	Moreover, the bound is a worst-case \emph{uniform} estimate over $\lambda \geq 0$.
	We also note that when $\lambda=0$, feasibility forces $z_j=0$ (for $p=1$ via $\|z_j\|_*\le \lambda$, and for $p>1$ via the extended-value convention $\Omega_p(z,0)=+\infty$ for $z \neq 0$), hence $P_j=D_j$ and~$\Delta_j(u,0)=0$.
\end{remark}

We now specialize the general duality bound to the widely used affine-logarithmic structure considered in Example~\ref{example: log-optimal portfolio control}.

 \begin{corollary}[Explicit Duality Gap Bound for Affine-Logarithmic Structures] \label{corollary: log_optimal_gap}
 	Fix~$n\in\mathcal{N}$, $u \in \mathcal U_{\rm v}(n;\eta)$, and $\lambda\ge 0$.
 	Consider the setting of Example~\ref{example: log-optimal portfolio control} with
 	$\Phi_n(u,x) = u^\top x + c_n(u)$ and $U(\cdot)=\log(\cdot)$.
 	Define the realized margin
 	$
 	\underline{\Phi}_n(u) :=\min_{x\in\mathfrak X_n}\Phi_n(u,x).
 	$ 
 	Then, for each $j=1, \dots, N_n$,
 	\[
 	\Delta_j(u, \lambda) \leq \frac{\|u\|_*^2}{2 \underline{\Phi}_n(u)^2}\,\mathfrak{D}^2 
 		 \leq \frac{\|u\|_*^2}{2\,\eta^2}\,\mathfrak{D}^2.
 	\]
 \end{corollary}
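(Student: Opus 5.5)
The plan is to reuse the proof of Proposition~\ref{proposition: Minimax Duality Gap Bound} rather than invoke its statement, because the proposition only supplies the \emph{uniform} constant $L_n$. Here we want the sharper, $u$-dependent constant
\[
L_n(u)=\|u\|_*^2/\underline{\Phi}_n(u)^2 .
\]
Inspecting that proof, the gap estimate \eqref{ineq: delta_bound} reads $\Delta_j \le \sup_{x'\in\mathfrak X_n} R(x',y)$ for any linearization point $y\in\mathfrak X_n$. It uses only three facts about $f_u(x)=r_n(u,x)$ for the fixed $u$: concavity, differentiability, and the Sion step for the linearized problem. The Sion step does not involve any Lipschitz constant, so the argument goes through with any valid Lipschitz constant of $\nabla f_u$ for the fixed $u$.

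First, I would check the hypotheses for fixed $u\in\mathcal U_{\rm v}(n;\eta)$. The map $f_u(x)=\log(u^\top x+c_n(u))$ is the composition of the concave nondecreasing function $\log$ with an affine map. Since $\Phi_n(u,x)\ge\eta>0$ on $\mathfrak X_n$, and by continuity also on a neighborhood of $\operatorname{conv}(\mathfrak X_n)$ where the affine function stays positive, $f_u$ is concave and smooth there.

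Second, I would use the computation in Example~\ref{example: log-optimal portfolio control}, which gives
\[
\|\nabla f_u(x)-\nabla f_u(y)\|_*\le L_n(u)\|x-y\|, \qquad L_n(u)=\|u\|_*^2/\underline{\Phi}_n(u)^2 .
\]
Applying the descent lemma to $-f_u$ then yields $0\le R(x,y)\le \tfrac12 L_n(u)\|x-y\|^2\le\tfrac12 L_n(u)\mathfrak D^2$.

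The main obstacle is that the descent lemma integrates along the segment $[y,x]$. That segment lies in $\operatorname{conv}(\mathfrak X_n)$, not necessarily in $\mathfrak X_n$, so the Lipschitz estimate is needed on the convex hull. On $\operatorname{conv}(\mathfrak X_n)$ the affine function $\Phi_n(u,\cdot)$ attains its minimum at a point of $\mathfrak X_n$, since the minimum of an affine function over a convex hull equals its minimum over the generating set. Hence $\min_{\operatorname{conv}}\Phi_n(u,\cdot)=\underline{\Phi}_n(u)$, and the example's bound holds verbatim on the hull. This point is worth stating explicitly.

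Finally, I would combine these estimates with \eqref{ineq: delta_bound} to get the first inequality. The second inequality follows because $\underline{\Phi}_n(u)\ge\eta$ by the definition of $\mathcal U_{\rm v}(n;\eta)$, as in \eqref{ineq: robustly viable condition min form}. For $\lambda=0$ the bound is trivial, since $\Delta_j=0$ by the preceding remark.
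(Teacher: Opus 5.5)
Your route is the paper's. Its proof reads off $L_n(u)=\|u\|_*^2/\underline{\Phi}_n(u)^2$ from Example~\ref{example: log-optimal portfolio control} and substitutes it into Proposition~\ref{proposition: Minimax Duality Gap Bound}. You are right that this really means rerunning the proposition's proof with the fixed-$u$ constant, since its statement only offers the uniform $L_n$. You are also right that the descent lemma needs the gradient estimate along segments of $\operatorname{conv}(\mathfrak X_n)$. You supply that correctly: the affine map $\Phi_n(u,\cdot)$ has the same minimum over $\mathfrak X_n$ and over its hull. Both points improve on the rigor of the paper's one-line proof.

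\textbf{The gap.} The problem is the step you wave through, the Sion step. It is indeed free of Lipschitz constants, but it depends on the geometry of $\mathfrak X_n$. It breaks in exactly the case your hull discussion addresses, namely nonconvex $\mathfrak X_n$. Although $\mathfrak L(x,z_j)$ is affine in $x$, the primal
\[
P_{\rm lin}(y)=\inf_{x\in\mathfrak X_n}\bigl[T_y(x)+\lambda\|x-\widehat{\mathcal X}_n^{(j)}\|^p\bigr]
\]
is an infimum of a \emph{convex} function of $x$. Replacing $\mathfrak X_n$ by its hull can therefore strictly lower it. Sion on the hull then gives only $D_{\rm lin}(y)\le P_{\rm lin}(y)$, which is the wrong direction for \eqref{ineq: delta_bound}.

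\textbf{The claim fails without convexity.} No repair is possible, because the inequality itself fails. Take $d=m=1$, $\mathcal U=[0,0.1]$, $c_n\equiv 1$, $\mathfrak X_n=\{-1,0\}$ and $u=0.1$, so that $\Phi_n(u,x)=1+0.1x$ and $\underline{\Phi}_n(u)=\eta=0.9$. Take also $\widehat{\mathcal X}_n^{(j)}=0$, $p=2$ (so $\Omega_2(z,\lambda)=z^2/(4\lambda)$), and $\lambda=b:=-\log 0.9$. Then
\[
P_j=\min\{0,\ \log 0.9+b\}=0, \qquad D_j=\sup_{z\in\mathbb R}\Bigl[\min\{0,\,-b-z\}-\frac{z^2}{4b}\Bigr]=-\frac{b}{4},
\]
with the supremum attained at $z=-b$. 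Hence $\Delta_j=b/4\approx 0.026$, while the claimed bound is $0.01/(2\cdot 0.81)\approx 0.006$. Over the hull $[-1,0]$ the primal drops to about $-0.025$ and the gap to about $0.0014$, consistent with the bound.

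Your argument, like the paper's, therefore proves the corollary only under the extra hypothesis that $\mathfrak X_n$ is convex (e.g.\ a box or polytope). In that case Sion applies on $\mathfrak X_n$ directly, and your hull-extension step becomes vacuous.
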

 \begin{proof}
	Since $u\in\mathcal U_{\rm v}(n;\eta)$, by definition we have $\Phi_n(u,x)\ge \eta$ for all $x\in\mathfrak X_n$. Taking the minimum over $x\in\mathfrak X_n$ yields $\underline{\Phi}_n(u)=\min_{x\in\mathfrak X_n}\Phi_n(u,x)\ge \eta$.
	 Moreover, as derived in Example~\ref{example: log-optimal portfolio control}, $\nabla_x r_n(u,\cdot)$ is $L_n(u)$-Lipschitz with $L_n(u) := \tfrac{\|u\|_*^2}{\underline\Phi_n(u)^2}$. 
 	Hence, 
 	\[
 	L_n(u) =\frac{\|u\|_*^2}{\underline{\Phi}_n(u)^2}
 		\leq \frac{\|u\|_*^2}{\eta^2},
 	\]
 	Substituting into Proposition~\ref{proposition: Minimax Duality Gap Bound} yields the~claim.
 \end{proof}

\begin{remark}[Magnitude and Interpretation of the Gap Bound]\rm
	Corollary~\ref{corollary: log_optimal_gap} provides a two-level certificate on the minimax duality gap.
	The first bound, in terms of the realized margin $\underline{\Phi}_n(u)=\min_{x\in\mathfrak X_n}\Phi_n(u,x)$, is an \emph{a posteriori} estimate that can be evaluated for a given feasible control $u$ and is typically much sharper.
	The second bound replaces $\underline{\Phi}_n(u)$ by the design viability margin $\eta$ and serves as a uniform \emph{a priori} envelope that holds for all $u \in \mathcal{U}_{\rm v}(n;\eta)$.
	In both cases, the bound scales quadratically in $\|u\|_*$ and $\mathfrak{D}$, and it increases as the corresponding margin ($\underline{\Phi}_n(u)$ or $\eta$) decreases toward zero.
\end{remark}

\begin{lemma}[Probabilistic Performance Guarantee] \label{lemma: statistical_bound}
	Fix $\beta\in(0,1)$ and a finite candidate set $\mathcal N$.
	For each $n\in\mathcal N$, let $\varepsilon_n$ be calibrated according to Definition~\ref{def: calibrated_radius},  i.e.,~$
	\mathbb{P} \left(\mathbb F_{\mathrm{true},n} \in \mathcal{B}_{\varepsilon_n}^{ (p) }( \widehat{\mathbb F}_n) \right) \geq 1-\frac{\beta}{|\mathcal N|}.
$
	Let $J_{\rm cvx}^*(n)$ denote the optimal value of the tractable program in Theorem~\ref{theorem:convex reduction}
	constructed from $\widehat{\mathbb F}_n$ and radius $\varepsilon_n$.
	Then, with probability at least $1-\beta$ (with respect to the sampling that generates $\{\widehat{\mathcal X}_n^{(j)}\}$),
	the following holds simultaneously for all $n\in\mathcal N$:
	\[
	J_{\rm cvx}^*(n) \leq \max_{u\in \mathcal U_{\rm v}(n; \eta) } \frac1n\,\mathbb E^{\mathbb F_{\mathrm{true},n}}\!\left[U(\Phi_n(u,\mathcal X))\right].
	\]
\end{lemma}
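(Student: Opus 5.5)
The plan is to condition on a single high-probability ``good event'' and, on that event, chain three inequalities: relaxation value $\le$ exact worst-case value $\le$ value under the true law $\le$ the maximum over controls. Define
\[
E := \bigcap_{n\in\mathcal N}\big\{\mathbb F_{\mathrm{true},n}\in\mathcal B_{\varepsilon_n}^{(p)}(\widehat{\mathbb F}_n)\big\}.
\]
By Definition~\ref{def: calibrated_radius} and the union bound stated there, $\mathbb P(E)\ge 1-\beta$. Everything afterwards is deterministic on $E$, for each fixed $n\in\mathcal N$. This gives the simultaneity over $n$ automatically, because no further probabilistic step is needed.

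\emph{Step 1 (relaxation is a lower bound).} Fix $n$. For any feasible $(u,\lambda,s_j,z_j)$ of \eqref{problem:DRO-ELG dual}, the chain in the proof of Theorem~\ref{theorem:convex reduction} shows
\[
\frac1n\Big(-\lambda\varepsilon_n^p+\tfrac1{N_n}\textstyle\sum_j s_j\Big)\le \frac1n\inf_{\mathbb F\in\mathcal B_{\varepsilon_n}^{(p)}(\widehat{\mathbb F}_n)}\mathbb E^{\mathbb F}[U(\Phi_n(u,\mathcal X))].
\]
I will make this explicit for a fixed feasible point. Each constraint, combined with the minimax inequality, yields
\[
s_j\le \inf_{x\in\mathfrak X_n}\big[U(\Phi_n(u,x))+\lambda\|x-\widehat{\mathcal X}_n^{(j)}\|^p\big].
\]
The left-hand objective is therefore at most the dual expression \eqref{eq:reduction_to_pointwise} evaluated at this $\lambda$. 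Weak duality then bounds that expression by the worst-case expectation. Only weak duality is needed here, so the $\varepsilon_n=0$ case requires no separate treatment. Concretely, for any $\mathbb F$ in the ball with an optimal coupling, disintegrate the coupling into conditionals $\mathbb F_j$. Integrating the pointwise inequality and using the transport budget $\le\varepsilon_n^p$ gives the claim.

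\emph{Step 2 (true law is in the ball).} On $E$, $\mathbb F_{\mathrm{true},n}$ belongs to the ball. Hence
\[
\inf_{\mathbb F\in\mathcal B_{\varepsilon_n}^{(p)}(\widehat{\mathbb F}_n)}\mathbb E^{\mathbb F}[\cdot]\le\mathbb E^{\mathbb F_{\mathrm{true},n}}[U(\Phi_n(u,\mathcal X))].
\]
For $u\in\mathcal U_{\rm v}(n;\eta)$ the right-hand side is finite, because $\Phi_n\ge\eta$ uniformly and $r_n$ is continuous on a compact set.

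\emph{Step 3 (pass to the maximum).} Bound the right-hand side by $\sup_{u\in\mathcal U_{\rm v}(n;\eta)}\frac1n\mathbb E^{\mathbb F_{\mathrm{true},n}}[U(\Phi_n(u,\mathcal X))]$, then take the supremum over feasible points on the left to obtain $J^*_{\rm cvx}(n)$. The supremum on the right is attained, so it may be written as a max. The reason is that $\mathcal U_{\rm v}(n;\eta)$ is compact: it is a closed subset of the compact $\mathcal U$, defined by the continuous $g(u)=\min_x\Phi_n(u,x)$. Moreover, $u\mapsto\mathbb E[r_n(u,\mathcal X)]$ is continuous by uniform continuity of $r_n$ on a compact set together with bounded convergence. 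If $\mathcal U_{\rm v}$ is empty, both sides are $-\infty$ and the claim is trivial.

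The main obstacle is Step 1: making rigorous that the relaxation value never exceeds the true worst-case value. The paper's derivation writes the constraint sets with $\ge$ signs between problems, and I will re-verify the direction pointwise as above rather than rely on that presentation. I also need the conventions at $\lambda=0$ (forcing $z_j=0$) to be handled. With $\lambda=0$ the bound reads $s_j\le\min_x U(\Phi_n(u,x))$, which lies below any expectation, so this case is consistent.
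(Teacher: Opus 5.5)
Your proposal is correct and follows the same route as the paper's proof. Both intersect the coverage events via the union bound, then on that event chain $J_{\rm cvx}^*(n) \le$ worst-case value $\le$ true expected utility and pass to the maximum over $\mathcal U_{\rm v}(n;\eta)$. The only difference is that you re-derive the first link directly from Fenchel--Young and weak duality, so no strong-duality argument or $\varepsilon_n=0$ case split is needed, whereas the paper cites Theorem~\ref{theorem:convex reduction} for it.
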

 \begin{proof}
	Fix $\beta\in(0,1)$ and a finite candidate set $\mathcal N$.
For each $n\in\mathcal N$, let $\varepsilon_n$ satisfy \eqref{eq:coverage_union}. Then, Inequality \eqref{eq:coverage_union} and a union bound yields
 	\[
 	\mathbb P\!\left(\forall n\in\mathcal N:\ \mathbb F_{\mathrm{true},n}\in \mathcal{B}_{\varepsilon_n}^{ (p) } (\widehat{\mathbb F}_n)\right) \geq 1-\beta.
 	\]
 	On this event, for each fixed $n$, Theorem~\ref{theorem:convex reduction} yields
 	\begin{align} \label{ineq: Jcvx}
 	J_{\rm cvx}^*(n)\ \le\ \max_{u\in \mathcal U_{\rm v}(n; \eta) }\ \inf_{\mathbb F\in\mathcal{B}_{\varepsilon_n}^{(p)}(\widehat{\mathbb F}_n)}
 	\frac1n\,\mathbb {E}^{\mathbb F}\!\left[U(\Phi_n(u,\mathcal X))\right].
 	\end{align}
 	Since $\mathbb F_{\mathrm{true},n}$ belongs to the ambiguity set on the same event, we have for every fixed~$u \in \mathcal U_{\rm v}(n; \eta) $,
 	\[
 	\inf_{\mathbb{ F}\in \mathcal{B}_{\varepsilon_n}^{ (p) } (\widehat{\mathbb F}_n)}
 	\mathbb E^{\mathbb{ F}}\!\left[U(\Phi_n(u,\mathcal X))\right]
 	\le
 	\mathbb E^{\mathbb F_{\mathrm{true}, n}} \left[ U(\Phi_n(u,\mathcal X))\right].
 	\]
 	Taking $\max_{u\in \mathcal U_{\rm v}(n; \eta) } \frac1n(\cdot)$ on both sides yields
 	\[
 	\max_{u \in \mathcal{U}_{\rm v}(n; \eta) } \inf_{\mathbb{F} \in \mathcal{B}_{\varepsilon_n}^{ (p) } (\widehat{\mathbb F}_n)}
 	\frac1n\,\mathbb E^{\mathbb F}\!\left[U(\Phi_n(u,\mathcal X))\right]
 	\le
 	\max_{u \in \mathcal{U}_{\rm v}(n; \eta) } \frac1n\,\mathbb E^{\mathbb{F}_{\mathrm{true}, n}} \left[ U(\Phi_n(u,\mathcal X))\right].
 	\]
 	Combining with~\eqref{ineq: Jcvx} completes the proof.
 \end{proof}

 \subsection{Long-Run Performance Guarantees}
 The preceding results establish guarantees for a single sampling period. In closed-loop operation, however, the robust policy $u^*$ is applied repeatedly over an infinite horizon. We now establish the link between the tractable relaxation $J_{\rm cvx}^*(n)$ and the long-run performance of the system. We first present a general result for any concave utility satisfying the standing assumptions. This ensures that the optimal value of the relaxation provides a deterministic floor for the \emph{long-run average utility rate}.

 \begin{theorem}[Long-Run Average Utility Guarantee] \label{theorem:long_run_utility}
 	Fix $n \in \mathcal{N}$. Suppose that the sequence of $n$-period aggregated disturbances $\{\mathcal{X}_{k,n}\}_{k \geq 0}$ is strictly stationary and ergodic under the true distribution $\mathbb{F}_{\mathrm{true},n}$. Let $\varepsilon_n$ be calibrated according to Definition~\ref{def: calibrated_radius}, and define the coverage event $\mathcal{E}_n := \{ \mathbb{F}_{\mathrm{true}, n} \in \mathcal{B}_{\varepsilon_n}^{(p)} (\widehat{\mathbb{F}}_n)\}$. 
 	Let~$u^*$ be an optimal control for the tractable relaxation~\eqref{problem:DRO-ELG dual} with optimal value~$J_{\rm cvx}^*(n)$. Then, conditional on the event $\mathcal{E}_n$, the long-run average utility rate satisfies
 	\begin{align} \label{eq: utility_guarantee}
 		\lim_{K \to \infty} \frac{1}{Kn} \sum_{k=0}^{K-1} U(\Phi_n(u^*, \mathcal{X}_{k,n})) \geq J_{\mathrm{cvx}}^*(n) \qquad \mathbb{F}_{\mathrm{true}, n}\text{-a.s.}
 	\end{align}
 \end{theorem}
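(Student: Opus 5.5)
The plan is to split the argument into a \emph{dynamic} step (ergodic averaging) and a \emph{static} step (comparison of the expected reward with the relaxation value). The two are linked through the single-period quantity $\frac{1}{n}\mathbb{E}^{\mathbb{F}_{\mathrm{true},n}}[U(\Phi_n(u^*,\mathcal{X}))]$.

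\emph{Step 1 (integrability and Birkhoff).} Since $u^*\in\mathcal{U}_{\rm v}(n;\eta)$, Lemma~\ref{lemma: robust viability condition} gives $\Phi_n(u^*,x)\ge\eta$ on $\mathfrak{X}_n$. Because $x\mapsto U(\Phi_n(u^*,x))$ is continuous on the compact set $\mathfrak{X}_n$ by (A1)--(A2), the reward $r_n(u^*,\cdot)$ is bounded, hence integrable under $\mathbb{F}_{\mathrm{true},n}$.

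We then regard $u^*$ as fixed. It is computed from the data $\widehat{\mathbb{F}}_n$, which we treat as independent of, or conditioned upon separately from, the future path $\{\mathcal{X}_{k,n}\}_{k\ge0}$. Under this view, the process $Y_k:=U(\Phi_n(u^*,\mathcal{X}_{k,n}))$ is a measurable function of a strictly stationary ergodic sequence, so it is itself stationary and ergodic. Birkhoff's ergodic theorem therefore gives
\[
\lim_{K\to\infty}\frac{1}{K}\sum_{k=0}^{K-1}Y_k=\mathbb{E}^{\mathbb{F}_{\mathrm{true},n}}\big[U(\Phi_n(u^*,\mathcal{X}))\big]\quad\text{a.s.}
\]
In particular, the limit in \eqref{eq: utility_guarantee} exists. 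Dividing by $n$ turns it into the expected single-period rate.

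\emph{Step 2 (static comparison at $u^*$).} Here we need a pointwise-in-$u$ version of Lemma~\ref{lemma: statistical_bound}, evaluated at $u^*$ itself and not at the maximizer of the true objective. The proof of Theorem~\ref{theorem:convex reduction} shows, for every fixed feasible $u$, that the value of the inner program in $(\lambda,s_j,z_j)$ is at most $\inf_{\mathbb{F}\in\mathcal{B}_{\varepsilon_n}^{(p)}(\widehat{\mathbb{F}}_n)}\mathbb{E}^{\mathbb{F}}[U(\Phi_n(u,\mathcal{X}))]$. This follows from the chain of equalities \eqref{eq:strong_duality_step}--\eqref{eq:reduction_to_pointwise} together with the minimax inequality \eqref{ineq:turn into min_sup}.

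Let $(u^*,\lambda^*,s^*,z^*)$ be optimal for \eqref{problem:DRO-ELG dual}; it exists by the compactness argument in that proof. Then
\[
J^*_{\rm cvx}(n)=\frac{1}{n}\Big(-\lambda^*\varepsilon_n^p+\frac{1}{N_n}\sum_j s_j^*\Big)\le\frac{1}{n}\inf_{\mathbb{F}\in\mathcal{B}_{\varepsilon_n}^{(p)}(\widehat{\mathbb{F}}_n)}\mathbb{E}^{\mathbb{F}}\big[U(\Phi_n(u^*,\mathcal{X}))\big].
\]
On the event $\mathcal{E}_n$, the true distribution $\mathbb{F}_{\mathrm{true},n}$ lies in the ball, so the right-hand side is at most $\frac{1}{n}\mathbb{E}^{\mathbb{F}_{\mathrm{true},n}}[U(\Phi_n(u^*,\mathcal{X}))]$. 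Combining this with Step 1 gives \eqref{eq: utility_guarantee}.

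\emph{Main obstacle.} The delicate point is the probabilistic bookkeeping in Step 1. The control $u^*$ and the event $\mathcal{E}_n$ are random through the sample, whereas the almost-sure statement concerns the closed-loop disturbance path. I would handle this by fixing the data, so that $u^*$ is deterministic and $\mathcal{E}_n$ is a deterministic condition on $\widehat{\mathbb{F}}_n$, and then applying Birkhoff to the future path under $\mathbb{F}_{\mathrm{true},n}$. This requires the implicit assumption that conditioning on the training data does not alter the stationary ergodic law of $\{\mathcal{X}_{k,n}\}$, for instance because the data come from a disjoint past segment that is independent of the future path or handled by a mixing argument. I would state this assumption explicitly.

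A secondary point is that Lemma~\ref{lemma: statistical_bound} as stated bounds against the maximum over $u$ rather than against $u^*$. For this reason the pointwise inequality in Step 2 must be extracted directly from the proof of Theorem~\ref{theorem:convex reduction}.
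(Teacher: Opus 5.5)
Your proposal is correct and follows essentially the same route as the paper's proof. Both arguments bound $U(\Phi_n(u^*,\cdot))$ using viability and compactness, apply Birkhoff's ergodic theorem to $Y_k=U(\Phi_n(u^*,\mathcal{X}_{k,n}))$, and then combine the pointwise-in-$u$ bound $J_{\rm cvx}^*(n)\le \frac{1}{n}\inf_{\mathbb{F}\in\mathcal{B}_{\varepsilon_n}^{(p)}(\widehat{\mathbb{F}}_n)}\mathbb{E}^{\mathbb{F}}[U(\Phi_n(u^*,\mathcal{X}))]$ from the constraint-level minimax inequality with the coverage event $\mathcal{E}_n$.

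The only difference is that you state explicitly that conditioning on the training data (which fixes $u^*$ and $\mathcal{E}_n$) must leave the stationary ergodic law of the future path unchanged; the paper relies on this implicitly by treating $u^*$ as fixed, so your version sharpens the argument rather than changing it. One small simplification: you need only that $J_{\rm cvx}^*(n)$ equals the supremum over $(\lambda,s_j,z_j)$ at $u^*$, not that a full optimal tuple $(u^*,\lambda^*,s^*,z^*)$ is attained. This lets you drop the appeal to the paper's compactness argument, which is loose because $\lambda$, $z_j$, and $s_j$ range over unbounded sets.
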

 
 \begin{proof}
 	Define the per-period utility realization 
 	$$
 	Y_k := \frac{1}{n} U(\Phi_n(u^*, \mathcal{X}_{k,n})), \qquad k = 0,1,2,\ldots
 	$$ 
 	Since $\{\mathcal{X}_{k,n}\}_{k\geq 0}$ is strictly stationary and ergodic, and the map $x \mapsto \frac{1}{n} U(\Phi_n(u^*, x))$ is measurable, the sequence $\{Y_k\}$ is also strictly stationary and ergodic. 
 Moreover, since~$u^* \in \mathcal{U}_{\rm v}(n; \eta)$, we have $\Phi_n(u^*,x)\ge \eta$ for all $x \in\mathfrak{X}_n$. Because~$\mathfrak{X}_n$ is compact and~$x \mapsto \Phi_n(u^*,x)$ is continuous, $\Phi_n(u^*,x)$ is bounded above on $\mathfrak{X}_n$. Since~$U(\cdot)$ is continuous on $[\eta,\infty)$, it follows that $Y_k$ is bounded and hence integrable, i.e.,~$\mathbb{E}^{\mathbb{F}_{\mathrm{true},n}}[|Y_0|]<\infty$.
 	Applying the Birkhoff Ergodic Theorem yields
 	\begin{align} \label{ineq: Birkhoff Ergodic ineq}
 	\lim_{K \to \infty} \frac{1}{K} \sum_{k=0}^{K-1} Y_k
 	= \mathbb{E}^{\mathbb{F}_{\mathrm{true},n}}[Y_0]
 	= \frac{1}{n}\mathbb{E}^{\mathbb{F}_{\mathrm{true},n}}\!\left[U(\Phi_n(u^*,\mathcal{X}))\right]
 	\qquad \mathbb{F}_{\mathrm{true},n}\text{-a.s.},
 	\end{align}
 	where $\mathcal{X}$ denotes a generic aggregate disturbance distributed according to the common distribution $\mathbb{F}_{\mathrm{true},n}$ of the stationary sequence.
 	Next, recall that the tractable relaxation in Theorem~\ref{theorem:convex reduction} is derived via the minimax inequality applied at the constraint level; i.e., see \eqref{ineq:turn into min_sup}.
 Consequently, for every fixed admissible control $u$, the relaxation value provides a lower bound on the corresponding worst-case expected utility. In particular, for the relaxation optimizer $u^*$, we have
 	\begin{align} \label{ineq: Jcvx ineq}
 		 	J_{\mathrm{cvx}}^*(n) \leq \inf_{\mathbb{F} \in \mathcal{B}_{\varepsilon_n}^{(p)}(\widehat{\mathbb{F}}_n)} \frac{1}{n} \mathbb{E}^{\mathbb{F}} \left[ U(\Phi_n(u^*, \mathcal{X})) \right].
 	\end{align}
 	
 	Conditional on the event $\mathcal{E}_n$, the true distribution $\mathbb{F}_{\mathrm{true},n} \in \mathcal{B}_{\varepsilon_n}^{(p)}(\widehat{\mathbb{F}}_n)$. Thus, the infimum over the ambiguity set is a lower bound for the expectation under the true distribution; i.e.,
 	$$
 	\inf_{\mathbb{F} \in \mathcal{B}_{\varepsilon_n}^{(p)}(\widehat{\mathbb{F}}_n)} \frac{1}{n} \mathbb{E}^{\mathbb{F}} \left[ U(\Phi_n(u^*, \mathcal{X})) \right] \leq \frac{1}{n} \mathbb{E}^{\mathbb{F}_{\mathrm{true},n}}\left[ U(\Phi_n(u^*, \mathcal{X})) \right].
 	$$
 	Combining with \eqref{ineq: Birkhoff Ergodic ineq} and \eqref{ineq: Jcvx ineq}, the desired result follows.
 \end{proof}

 For the specific case of logarithmic utility, the long-run average utility rate coincides with the asymptotic capital growth rate. The following corollary formalizes this connection.

 \begin{corollary}[Long-Run Growth Rate Guarantee] \label{corollary: growth_guarantee}
 	Consider the setting of Theorem~\ref{theorem:long_run_utility}. If the utility function is logarithmic, i.e., $U(x) = \log(x)$, then conditional on the event $\mathcal{E}_n$, the long-run realized growth rate satisfies
 	$$
 	  \lim_{K \to \infty} \frac{1}{Kn} \log V_K \geq J_{\mathrm{cvx}}^*(n) \qquad \mathbb{F}_{\mathrm{true}, n}\text{-a.s.}
 	$$
 \end{corollary}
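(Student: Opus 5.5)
The plan is to reduce the corollary to Theorem~\ref{theorem:long_run_utility} by telescoping the multiplicative dynamics \eqref{eq:mult_dynamics_general} under the fixed control $u^*$. By Lemma~\ref{lemma: robust viability condition}, since $u^* \in \mathcal{U}_{\rm v}(n;\eta)$ and $V_0>0$, every $V_k$ is strictly positive almost surely. Hence $\log V_k$ is well defined, and
\[
\log V_K = \log V_0 + \sum_{k=0}^{K-1} \log \Phi_n(u^*, \mathcal{X}_{k,n}).
\]
Dividing by $Kn$ gives
\[
\frac{1}{Kn}\log V_K = \frac{\log V_0}{Kn} + \frac{1}{Kn}\sum_{k=0}^{K-1} U(\Phi_n(u^*,\mathcal{X}_{k,n})), \qquad U=\log.
\]

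Next, I check that the log utility satisfies the hypotheses used in Theorem~\ref{theorem:long_run_utility}. First, $[\eta,\infty)\subseteq \mathrm{dom}(\log)$. Second, $\log$ is continuous on $[\eta,\infty)$, so the summands are bounded. With these in place, the theorem gives, on the coverage event $\mathcal{E}_n$, $\mathbb{F}_{\mathrm{true},n}$-a.s., that the limit of the second term exists and is at least $J_{\rm cvx}^*(n)$. The first term $\log V_0/(Kn)$ is a deterministic constant divided by $K$, so it tends to $0$ because $V_0>0$ is fixed and finite.

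Finally, I combine the two pieces. The limit of a sum of two convergent sequences exists and equals the sum of the limits. Hence $\lim_{K\to\infty}\frac{1}{Kn}\log V_K$ exists almost surely on $\mathcal{E}_n$ and is bounded below by $J^*_{\rm cvx}(n)$.

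There is no substantive obstacle. The only points needing care are, first, justifying positivity of $V_k$ so the logarithm is legitimate, which Lemma~\ref{lemma: robust viability condition} provides. Second, the indexing: $V_K$ is the state after $K$ sampling intervals, i.e.\ at physical time $Kn$, which is why the normalization is $Kn$ and matches \eqref{eq: utility_guarantee} exactly.
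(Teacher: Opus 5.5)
Your proposal is correct and follows essentially the same route as the paper. You telescope the multiplicative dynamics into $\log V_0 + \sum_{k=0}^{K-1}\log\Phi_n(u^*,\mathcal{X}_{k,n})$, note that the $\log V_0/(Kn)$ term vanishes, and apply Theorem~\ref{theorem:long_run_utility} with $U=\log$ to the remaining average; you also spell out the positivity of $V_k$ and the existence of the limit, which the paper leaves implicit.
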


 \begin{proof}
 	Under the multiplicative dynamics $V_{k+1} = V_k \Phi_n(u^*, \mathcal{X}_{k,n})$, the state at step $K$ is given by $V_K = V_0 \prod_{k=0}^{K-1} \Phi_n(u^*, \mathcal{X}_{k,n})$.
 	Taking logarithms and dividing by the total time $Kn$, we obtain
 	$$
 	\frac{1}{Kn} \log V_K = \frac{1}{Kn} \log V_0 + \frac{1}{Kn} \sum_{k=0}^{K-1} \log \Phi_n(u^*, \mathcal{X}_{k,n}).
 	$$
 	As $K \to \infty$, the first term $\frac{1}{Kn} \log V_0$ vanishes asymptotically, and the second term corresponds precisely to the long-run average utility rate defined in \eqref{eq: utility_guarantee} with $U=\log$. The result then follows immediately from Theorem~\ref{theorem:long_run_utility}.
 \end{proof}

\subsection{Solving the Joint Optimization Problem} \rm 
\label{subsection: Solving the Joint Optimization Problem}
The overall problem~\eqref{problem: general DRO problem} requires a joint maximization over the integer-valued sampling period $n$ and the continuous control vector $u$, making it a \emph{mixed-integer program}. 
Our solution strategy leverages the finite and low-cardinality nature of the candidate set $\mathcal{N}$. We solve the problem via explicit enumeration over $n$: for each candidate $n$, we compute the corresponding optimal control $u^*(n)$ and then select the pair $(u^*, n^*)$ that attains the largest certified objective value.

The \emph{inner problem} for each fixed $n$ is the convex program formulated in Theorem~\ref{theorem:convex reduction} and Corollary~\ref{corollary: Reduction of Semi-Infinite Constraint}.  Since the number of constraints in this program (indexed by the extreme points of $\mathfrak{X}_n$) grows exponentially with the disturbance dimension $d$, enumerating them directly is often computationally infeasible. Therefore, we solve this inner problem efficiently using a cutting-plane algorithm, the technical details of which are presented in Appendix~\ref{appendix: cutting_plane}.

\section{Illustrative Examples} \label{section: Illustrative Examples}  
This section instantiates the sampled-data robust control framework, developed in Sections~\ref{section: Problem Formulation} and \ref{section: Main Results}, on the  \emph{Log-Optimal Portfolio Control problem}. This problem serves as a canonical example of a multiplicative stochastic system where state-dependent friction (transaction costs) plays a governing role. Henceforth, for illustration, we select the Wasserstein order $p=1$ and the ground norm $\|\cdot\| =\|\cdot\|_1$.

 In the context of the general dynamics \eqref{eq:mult_dynamics_general}, the system state $V_k$ represents the portfolio wealth, the control input $u_k$ corresponds to the vector of asset allocation weights, and the disturbance $\mathcal{X}_{k,n}$ models the vector of random asset returns over the sampling interval. The growth function $\Phi_n$ explicitly captures the transaction costs associated with rebalancing.

\subsection{Setup and Data}
Let $u \in \mathbb{R}_+^m$ denote the vector of portfolio weights invested in risky assets. We assume  the transaction cost associated with rebalancing is modeled as $TC(u) := \sum_{i=1}^m \kappa_i |u_i|$, where $\kappa_i \in [0, 1)$ represents the proportional friction (e.g., brokerage fees and slippage) for asset $i$. This model is consistent with real-world trading mechanics, where fees are charged on the trade value.\footnote{
	For example, trading stocks on the Taiwan Stock Exchange (TWSE) typically incurs a broker handling fee up to $0.1425\%$, plus a securities transaction tax of $0.3\%$ on sell side. Similarly, professional brokerage services such as Interactive Brokers Pro apply asset-class-specific commission schedules based on trade value or volume.
}

Consequently, the net-of-cost growth factor is given by the affine form:
\[
\Phi_n(u, x) := u^\top x + c_n(u),
\]
where $x$ is the vector of \emph{excess returns} (i.e., risky asset returns net of the risk-free benchmark), and $c_n(u)$ represents the risk-free growth net of transaction costs, defined~as
$
c_n(u) := (1 + r_{f,n}) - TC(u).
$
Here, $r_{f, n}$ denotes the \emph{$n$-period} risk-free return, obtained by converting the observed annualized
Treasury-bill yield at the rebalancing time.\footnote{
	In our experiments, we use the CBOE Interest Rate 13-Week Treasury Bill (\texttt{IRX}) as the proxy, converting the annualized yield $r^{\mathrm{ann}}_f(t_k)$ to the sampling period $n$ via $r_{f,n}(t_k) := (1 + \tfrac{ r^{\mathrm{ann}}_f(t_k)}{252} )^n - 1$.
}
This formulation decouples the baseline net risk-free drift (isolated in $c_n(u)$) from the stochastic disturbance $x$, preserving the affine structure.

\paragraph{Data Description}
We consider a portfolio comprising a risk-free asset and ten risky assets selected from the top~ten constituents of the S\&P 500 by market capitalization  at year end~2022.
The specific assets are listed in Table~\ref{table:S&P500's top 10 stocks}. 
Stock price data was obtained from Yahoo Finance for the period from January~1,~2022, to May~31, 2025.
For the risk-free rate, we use the daily annualized yield on the 13-Week U.S. Treasury Bill (\texttt{\^IRX}). Within this horizon, the average annualized risk-free rates were approximately~$2.02\%$ in 2022, $5.07\%$ in 2023, $4.97\%$ in 2024, and $4.03\%$ in early 2025 (January--May).

Notably, the dataset encompasses distinct market regimes: 2022 was characterized by a bearish market, 2023 signaled a strong recovery (bullish phase), while 2024 and early 2025 corresponded to periods of moderate expansion and volatility. This diversity provides a robust testing ground to evaluate the adaptability of the proposed DRO~approach.

\begin{table}[htbp]
	\scriptsize
	\centering
	\caption{Selected portfolio constituents (Top 10 S\&P 500 stocks by market cap in 2022 year-end}
	\label{table:S&P500's top 10 stocks}
	\begin{tabular}{ l l l  c}
		\toprule
		Rank & Company & Ticker & Percentage of  Total\\ & & &  Index Market Value (\%)\\
		\midrule
		1 & Apple Inc. & \texttt{AAPL} & 6.2\\
		
		2 & Microsoft Corporation & \texttt{MSFT} & 5.3 \\
		
		3 & Amazon.com Inc. & \texttt{AMZN} & 2.6 \\
		
		4 & Alphabet Inc. Class C & \texttt{GOOG} & 1.6 \\
		
		5 & Alphabet Inc. Class A & \texttt{GOOGL} & 1.6 \\
		
		6 & United Health Group Inc. & \texttt{UNH} & 1.5 \\
		
		7 & Johnson \& Johnson & \texttt{JNJ} & 1.4 \\
		
		8 & Exxon Mobil Corporation & \texttt{XOM} & 1.4 \\
		
		9 & Berkshire Hathaway Inc. Class B & \texttt{BRK-B} & 1.2 \\
		
		10 & JPMorgan Chase \& Co. & \texttt{JPM} & 1.3 \\
		\bottomrule
	\end{tabular}
\end{table}

\paragraph{Backtest Simulation Methodology and Control Schemes}
To validate our theory, we implement the joint optimization framework defined in Problem~\ref{problem: general DRO problem}.
We consider a finite candidate set of admissible sampling periods $\mathcal{N} := \{5, 21, 42, 63\}$, corresponding to weekly, monthly, bi-monthly, and quarterly control updates.
For any specific sampling period $n \in \mathcal{N}$ and look-back window, we construct the empirical distribution~$\widehat{\mathbb F}_n$ and calibrate the ambiguity radius $\varepsilon_n$ consistent with Definition~\ref{def: calibrated_radius}.
We then solve the tractable convex program in Theorem~\ref{theorem:convex reduction} to obtain the certified robust performance rate $J_{\rm cvx}^*(n)$. We compare two control implementation schemes:

\begin{itemize}
	\item[$(i)$] \emph{Static Sampling Scheme.}
	At the initialization of the simulation ($t=0$), we evaluate the robust performance rate for all candidates using the initial training data and select the sampling period that maximizes the worst-case lower bound:
	\[
	n^* \in \arg\max_{n\in\mathcal N} J_{\rm cvx}^*(n).
	\]
	The selected sampling period $n^*$ is then held fixed throughout the out-of-sample backtest, representing a standard sampled-data controller with a pre-optimized but static update frequency.
	
	\item[$(ii)$] \emph{Adaptive-Sampling Scheme.}
	At the beginning of each control update step $k$, we recompute certified performance rate $J_{\rm cvx}^*(n)$ for all candidate horizons $n \in \mathcal{N}$ using the current look-back window. We then dynamically select the optimal sampling period for the subsequent holding interval:
	\[
	n_k^* \in \arg\max_{n\in\mathcal N} J_{\rm cvx}^*(n).
	\]
	In this scheme, the controller adapts the sampling period $n_k^*$ online in response to evolving market volatility and friction estimates.
\end{itemize}

\medskip
The out-of-sample backtesting simulation follows a standard rolling-window procedure, starting with an initial account value of $V_0 = \$1$:

\begin{enumerate}
	\item[$(i)$] \emph{Data Construction:} 
	At the beginning of the $k$th rebalancing period, corresponding to time $t_k$, we use a fixed look-back window of length $L=252$ trading days (approximately one year). From this history $[t_k - L, t_k]$, we construct a set of $N_n = L - n + 1$ overlapping samples of $n$-period compound return vectors $\{\widehat{\mathcal{X}}_n^{(j)}\}_{j=1}^{N_n}$. These samples form the empirical distribution $\widehat{\mathbb{F}}_n$.\footnote{
		Because these $n$-period samples are overlapping, they exhibit serial dependence. Standard sufficient conditions for the finite-sample coverage requirement in Definition~\ref{def: calibrated_radius} are typically derived under i.i.d. sampling; accordingly, in our implementation the radii $\varepsilon_n$ are calibrated via block bootstrap as a practical approximation to the target coverage condition.
	}
	
	\item[$(ii)$] \emph{Robust Optimization:} 
	We solve the tractable convex program derived in Theorem~\ref{theorem:convex reduction} using the empirical distribution $\widehat{\mathbb{F}}_n$. This yields the robustly optimal control vector $u_k^*$ (portfolio weights) for the upcoming sampling period. In the adaptive-horizon strategy, this optimization is performed for all $n \in \mathcal{N}$ at each rebalancing time, and the horizon $n_k^*$ with the largest certified value is selected.

	\item[$(iii)$] \emph{Execution:} 
	The portfolio is rebalanced to the target weights $u_k^*$. Transaction costs are incurred on the rebalancing volume of the risky assets. The asset holdings are then held constant for the next $n$ trading days (zero-order hold implementation), where $n=n^*$ for the fixed-horizon strategy and $n=n_k^*$ for the adaptive-horizon strategy.

	\item[$(iv)$] \emph{Update:} 
	At the end of the sampling period, the account value $V_{k+1}$ is marked-to-market. The look-back window slides forward by $n$ days, and the process repeats from Step~1.
	
\end{enumerate}

This procedure generates a complete out-of-sample wealth trajectory. In the main frequency-selection procedure, the ambiguity radii are calibrated as $\varepsilon_n$ according to Definition~\ref{def: calibrated_radius}. Unless otherwise stated, a proportional transaction-cost rate of $10$ bps is used throughout.

\subsection{Out-Of-Sample Performance} 
Figure~\ref{fig:backtestresultsv4} compares the out-of-sample wealth trajectories across different rebalancing schemes. We evaluate both static-sampling controllers (with fixed $n^*$) and the adaptive-sampling controller (with dynamic~$n_k^*$). While the benchmarks---Buy-and-Hold and the Daily Rebalanced Equal Weight portfolio---achieve the highest nominal cumulative wealth, they suffer from significant drawdowns (approximately $20\%$; see \texttt{MDD} in Table~\ref{table:performance_metrics}).
In contrast, the proposed DRO schemes maintain competitive growth while offering superior downside-risk control. Notably, the adaptive-sampling scheme achieves the highest risk-adjusted performance (\texttt{SR} = 0.50) in this backtest simulation, validating the benefit of dynamic frequency selection.  
 Figure~\ref{fig:adaptivehorizonselections} reveals the realized sequence of sampling periods $n_k^*$ selected by the adaptive strategy under the baseline cost setting.

\begin{figure}[htbp]
	\centering
	\includegraphics[width=0.7\linewidth]{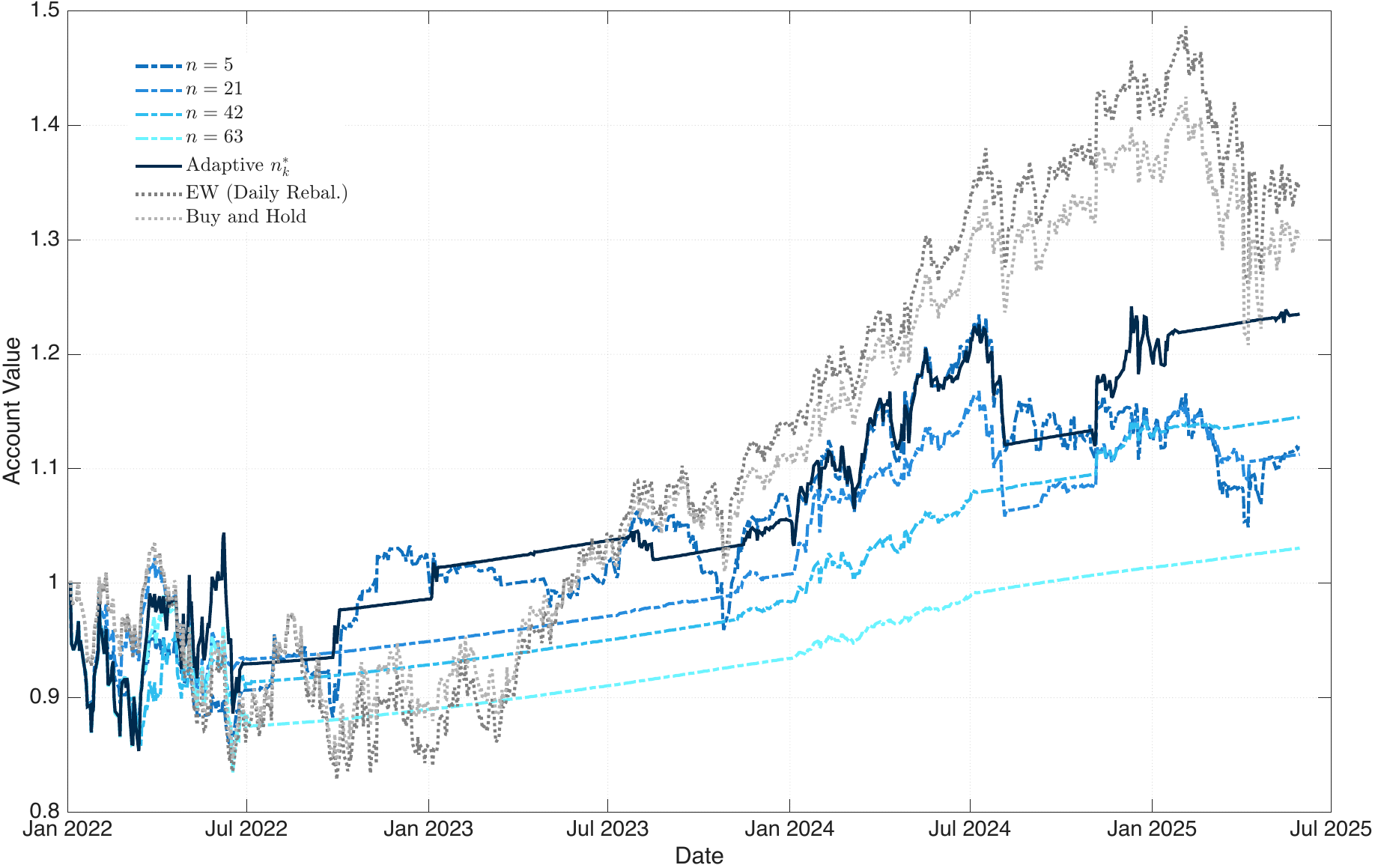}
	\caption{Out-of-sample wealth trajectories comparing static sampling with fixed $n^*$ and adaptive sampling with dynamic $n^*_k$. The DRO strategies induce a conservative allocation during high-volatility regimes, effectively limits the downside risks.}
	\label{fig:backtestresultsv4}
\end{figure}

 \begin{figure}[htbp]
 	\centering
 	\includegraphics[width=0.7\linewidth]{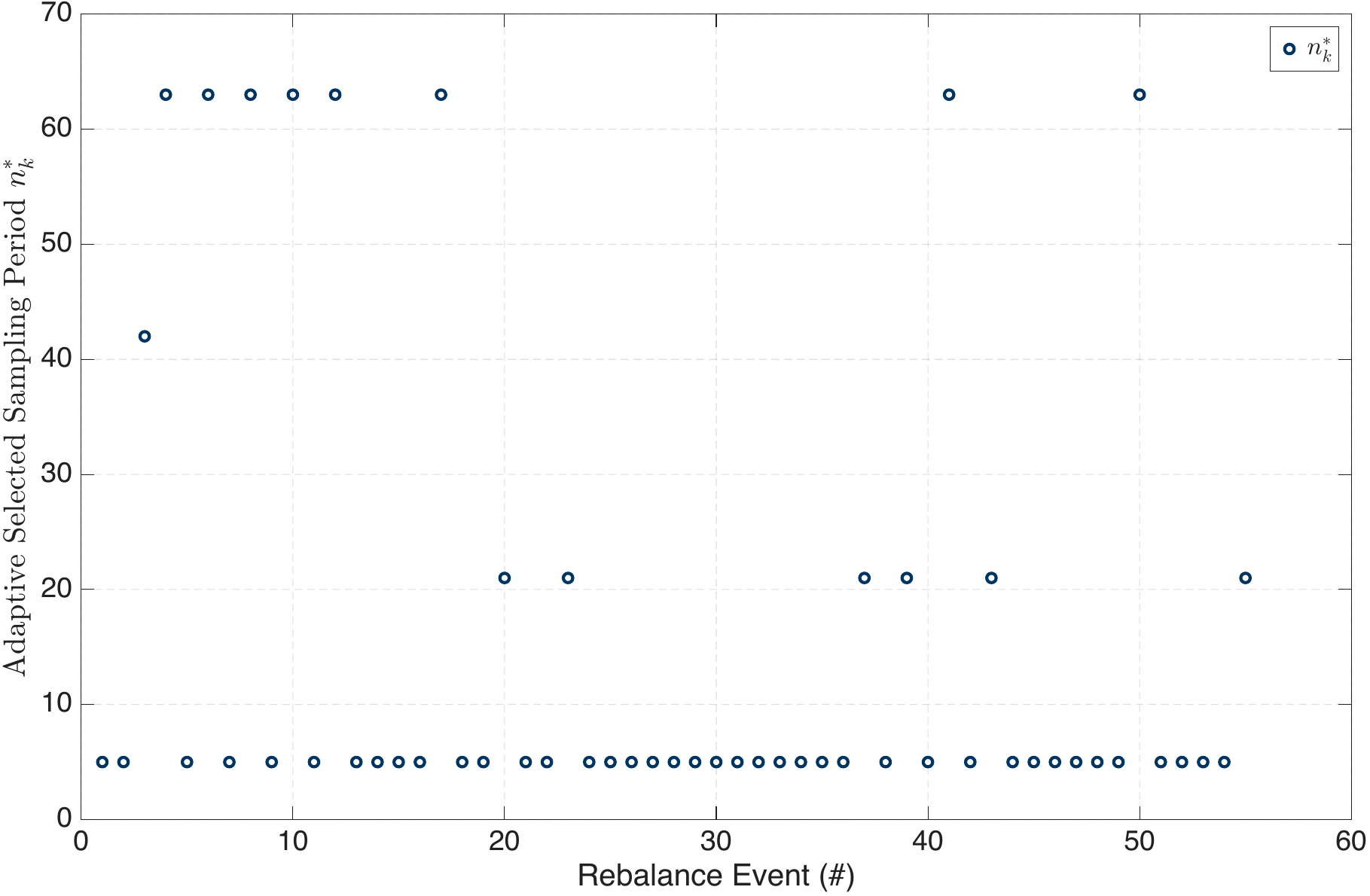}
 	\caption{Adaptive sampling scheme: realized sequence of selected sampling period $n_k^*$. The controller autonomously switches between high-frequency and low-frequency updates based on the trade-off between growth opportunities and friction (transaction costs).}
 	\label{fig:adaptivehorizonselections}
 \end{figure}

Table~\ref{table:performance_metrics} summarizes the out-of-sample trading performance across different sampling strategies and horizons. Reported metrics include final account value (\texttt{FV}),  compound annual growth rate (\texttt{CAGR}),\footnote{
	\texttt{CAGR} is computed as $\texttt{FV}^{1/T}-1$, where $T$ denotes the length of the backtest in years.
	} total return (\texttt{TR}),  maximum drawdown (\texttt{MDD}), annualized Sharpe ratio (\texttt{SR}), and annualized volatility (\texttt{Vol}), along with transaction-related statistics.

Overall, the proposed DRO schemes provide competitive risk-aware growth (see \texttt{FV}, \texttt{CAGR}, \texttt{TR}) while exhibiting improved downside-risk control, as reflected by smaller drawdowns (\texttt{MDD}) and lower volatility (\texttt{Vol}), relative to the baseline strategies (buy-and-hold and the daily rebalanced equal-weight portfolio). We observe that when estimated distributional ambiguity is high, the worst-case performance assessment becomes more conservative, prompting the controller to shift allocation toward the risk-free asset or reduce rebalancing frequency to conserve capital.

\begin{table}[htbp]
	\centering
 	\caption{Out-of-sample performance comparison of strategies.
	Shorthands: \texttt{FV} = final account value, \texttt{TR} = total return,
	\texttt{CAGR} = compound annual growth rate, \texttt{MDD} = maximum drawdown,
	\texttt{SR} = annualized Sharpe ratio, \texttt{Vol} = annualized volatility,
	\texttt{Best}/\texttt{Worst} = best/worst single-day return, \texttt{TC} = cumulative realized transaction costs paid over the backtest,
	\texttt{\#Reb} = number of rebalances. Here, a proportional transaction-cost rate of 10 bps is applied. }
	\label{table:performance_metrics}
	\scriptsize
	\begin{tabular}{lcrrrrrrrr}
		\toprule
		Strategy & $n$ 
		& \texttt{FV} & \texttt{CAGR} & \texttt{TR} & \texttt{MDD}
		& \texttt{SR} & \texttt{Vol}
 & \texttt{TC} & \texttt{\#Reb} \\
		\midrule
		Static  & 5  & 1.1183 & 3.34\% & 11.83\% &  15.07\% & 0.31	& 13.79\% & 0.0881 & 171 \\
		Static  & 21 & 1.1126 & 3.19\% & 11.26\% & 13.06\% & 0.37	& 9.90\% & 0.0112 & 41 \\
		Static  & 42 & 1.1451 & 4.06\% & 14.51\% & 14.35\% & 0.46 		& 9.77\%   & 0.0064 & 21 \\
		Static  & 63 & 1.0308 & 0.89\% & 3.08\% & 16.54\% & 0.13	& 11.68\%   & 0.0043 & 14 \\
		\midrule
		Adaptive & $n_k^*$ & 1.2352 &  6.41\% & 23.52\% & 15.14\% & 0.50 		& 14.75\% & 0.0358 & 60 \\
		EW (Daily) & 1 & 1.3504& 9.24\% & 35.04\% & 19.95\% & 0.36	& 18.62\% & 0.0092 & 854 \\
		Buy \& Hold & -- & 1.3059 & 8.16\% & 30.59\% & 18.80\% & 0.31 & 17.62\%  & 0.0000 & 0 \\
		\bottomrule
	\end{tabular}
\end{table}

We further analyze how transaction costs and distributional ambiguity jointly influence the optimal rebalancing frequency. 
Table~\ref{tab:TC_sensitvity_results_summary} shows the sensitivity of adaptive-sampling scheme. As expected, increasing transaction costs erodes performance and shifts the optimal sampling period toward longer sampling periods.  In particular, for lower TC (5 bps), the adaptive scheme favors short horizons ($n=5$ selected 77.9\% of the time) to capture transient growth opportunities. Conversely, for high TC (50 bps), the selection shifts significantly toward longer horizons ($n=63$ selected 32.4\% of the time) to mitigate friction. This behavior confirms that the proposed framework correctly balances the ``cost of control" against the ``cost of uncertainty."

\begin{table}[htbp]
	\centering
	\caption{Transaction Cost Sensitivity Summary (Adaptive Scheme). As transaction costs increase, the controller autonomously shifts toward longer sampling periods (fewer rebalances). }
	\label{tab:TC_sensitvity_results_summary}
	\scriptsize
	\begin{tabular}{l r r r r r r r}
		\toprule
		TC Rate & Final Value & Avg $n^*$ (days) & \# Rebalances 
		& $n=5$ & $n=21$ & $n=42$ & $n=63$ \\
		\midrule
		5 bps  & 1.1776 & 12.9 & 68 & 77.9\% & 10.3\% & 2.9\% & 8.8\% \\
		10 bps & 1.2352 & 15.9 & 55 & 72.7\% & 10.9\% & 1.8\% & 14.5\% \\
		25 bps & 1.0824 & 21.2 & 41 & 56.1\% & 22.0\% & 0.0\% & 22.0\% \\
		50 bps & 1.0042 & 25.6 & 34 & 55.9\% & 11.8\% & 0.0\% & 32.4\% \\
		\bottomrule
	\end{tabular}
\end{table}

\subsection{Empirical Tightness of the Minimax Relaxation}
To validate the tightness of the convex relaxation established  in Theorem~\ref{theorem:convex reduction}, we computed the theoretical upper bound on the minimax gap derived in Proposition~\ref{proposition: Minimax Duality Gap Bound} across the entire out-of-sample period. Specifically, we define the worst-case interchange error as~$\Delta_{\max} :=\max_j \Delta_j$ and verify the condition
$
\Delta_{\max} \leq  \frac{1}{2} L_n \mathfrak{D}^2 := B.
$
Since the bound~$B$ depends only on the global geometry of the ambiguity set, satisfying $\Delta_{\max} \leq B$ serves as a uniform certificate that the approximation error is bounded for every robustness constraint $j$.

\paragraph{Theoretical Validity and Utilization} 
The theoretical bound $\Delta_j \leq B$ was satisfied in 100\% of cases for sampling periods $n \geq 21$. For the short sampling periods ($n=5$), the satisfaction rate remained high at 94.7\%. The rare numerical exceptions correspond to edge cases where the theoretical bound falls below the solver's numerical tolerance.\footnote{ 
	Specifically,  we observed instances where the theoretical bound $B$ dropped to the order of $10^{-15}$ (driven by a vanishing diameter $\mathfrak{D}$ during low-volatility periods). In these cases, the solver's inherent numerical noise (approx. $10^{-9}$) technically exceeds $B$, triggering a false violation despite the gap being effectively zero.
}  Figure~\ref{fig:rollingvalidationtimeseries} illustrates the time-varying duality gap for both static- and adaptive-sampling~controls.

To assess the conservatism of the relaxation, we define  \emph{Gap Utilization} as  the ratio~$\frac{\Delta_{\max}}{B}$. Our  experiments indicate that the observed gap averages between 2.1\% and~6.4\% of the theoretical bound. This low utilization suggests that the proposed convex relaxation is empirically much tighter than the worst-case theoretical bound~implies.

\begin{figure}[htbp]
	\centering
	\includegraphics[width=0.7\linewidth]{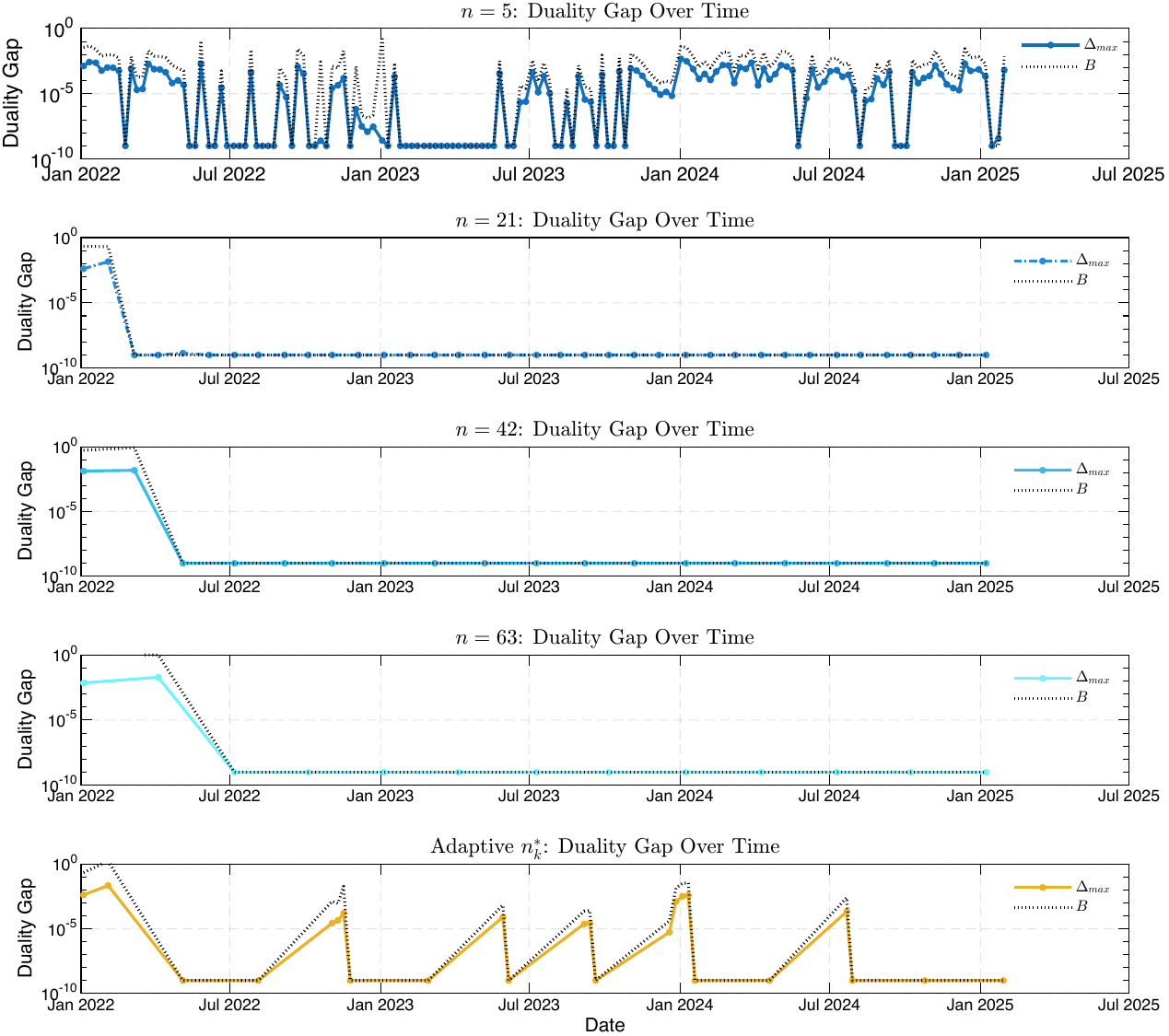}
	\caption{Time-varying duality gap for both static- and adaptive-sampling controls. The gap remains consistently small across all sampling periods $n \in \{5,21,42,63\}$ and adaptive $n_k^*$.}
	\label{fig:rollingvalidationtimeseries}
\end{figure}

\section{Conclusion} \label{section: conclusion}
	This work established a unified framework for the robust optimal control of sampled-data systems subject to multiplicative noise and distributional ambiguity. By jointly selecting the feedback policy and the control sampling period, we addressed the fundamental trade-off between discretization error, actuation costs, and model risk. A central theoretical contribution is the resolution of the ``concave-max'' geometry inherent in risk-sensitive distributionally robust control. While Sion's minimax theorem does not apply to the constraint-level interchange, we showed that the general minimax inequality yields a tractable convex relaxation that provides a rigorous lower bound on the worst-case expected utility.

	We provided four theoretical certificates to justify this framework. First, we established a \emph{probabilistic performance guarantee} (Lemma~\ref{lemma: statistical_bound}), proving that the optimal value of the tractable relaxation constitutes a valid lower confidence bound on the true expected utility with high probability.
	Second, we derived a non-asymptotic upper bound on the \emph{minimax duality gap} (Proposition~\ref{proposition: Minimax Duality Gap Bound}), demonstrating that the approximation error is uniformly controlled by the utility's smoothness and the diameter of the disturbance support.  A necessary and sufficient condition for \emph{robust viability} ensuring strict state positivity almost surely across the entire ambiguity set is also derived. 
 	Third,  and most importantly, we linked the static optimization to dynamic performance via a \emph{Long-Run Growth Rate Guarantee} (Theorem~\ref{theorem:long_run_utility}), proving that the derived robust optimal value serves as a deterministic floor for the asymptotic growth rate almost surely.

The practical efficacy of the proposed framework was demonstrated on a log-optimal portfolio control problem. The numerical results indicate that the \emph{adaptive sampling strategy}---which dynamically selects the optimal rebalancing horizon in response to evolving market conditions---significantly enhances risk-adjusted performance while maintaining system viability.
Future work could explore extending this framework to high-dimensional asset universes using factor models to mitigate the computational complexity of the semi-infinite constraints.


\bibliographystyle{siamplain}
\bibliography{refs}

\appendix

\section{Algorithmic Implementation via Cutting-Plane Method}
\label{appendix: cutting_plane}
This appendix details the cutting-plane algorithm for solving the inner-loop optimization problem in Corollary~\ref{corollary: Reduction of Semi-Infinite Constraint} for a fixed sampling period $n$. 
The resulting problem is a convex optimization problem with constraints indexed by the extreme-point
set $\operatorname{Ext}(\mathfrak{X}_n)$. When $\mathfrak{X}_n$ is a polytope, this set is finite and the number
of constraints scales as $N_n \times| \operatorname{Ext}(\mathfrak{X}_n)|$. In general, $\operatorname{Ext}(\mathfrak{X}_n)$ may be infinite, in which case the problem is semi-infinite and is solved via a separation-oracle-based cutting-plane method.

\begin{algorithm}[htbp] \footnotesize
	\caption{Cutting-Plane Algorithm for Solving the Tractable Formulation} 
	\label{alg:cutting_plane}
	\begin{algorithmic}[1]
		\State \textbf{Input:} Sampling $n \in\mathcal{N}$, samples $\{\widehat{\mathcal{X}}_n^{(j)}\}_{j=1}^{N_n}$, radius $\varepsilon_n$, and support $\mathfrak{X}_n$, and order $p \ge 1$.
		\State \textbf{Initialize:} Set iteration counter $k \leftarrow 0$. For each sample index $j \in \{1,\dots,N_n\}$, initialize the active constraint set $\mathcal V_j^{(0)}\subseteq \operatorname{Ext}(\operatorname{conv}( \mathfrak{X}_n))$ with any nonempty subset.
		
		\Loop
		\State \textbf{Solve Master Problem:} Solve the relaxed problem with the current active sets $\{\mathcal V_j^{(k)}\}$:
		\begin{align*}
			(u^{(k)},\lambda^{(k)},s^{(k)},z^{(k)}) \in   \arg& \max_{u,\lambda,s,z}\quad 
			\frac{1}{n}\left(-\lambda\varepsilon_n^p + \frac{1}{N_n}\sum_{j=1}^{N_n} s_j\right)\\
			\text{s.t.}
			& \min_{\mathcal X^v\in \mathcal V_j^{(k)}} \left[U(\Phi_n(u,\mathcal X^v)) + z_j^\top(\mathcal X^v-\widehat{\mathcal X}_n^{(j)}) - \Omega_p(z_j, \lambda)\right] \ge s_j,\quad \forall j,\\
			& \lambda\ge 0,\qquad u \in  \mathcal{U}_{\rm v}(n;\eta).
		\end{align*}
		
		\State \textbf{Separation Oracles:} For each $j\in\{1,\dots,N_n\}$, compute a most violated constraint:
		\[
		\mathcal X_j^* \in \arg\min_{\mathcal X\in\mathfrak X_n}\left[U(\Phi_n(u^{(k)},\mathcal X)) + (z_j^{(k)})^\top(\mathcal X-\widehat{\mathcal X}_n^{(j)})\right].
		\]
		Since the objective function is concave in $\mathcal{X}$ and $\mathfrak{X}_n$ is compact, the minimum is attained at an extreme point of $\operatorname{conv}(\mathfrak{X}_n)$.
		Thus, this subproblem acts as a separation oracle for the semi-infinite constraint indexed by
		$\operatorname{Ext}(\operatorname{conv}(\mathfrak{X}_n))$.
		
		\State Let $\phi_j^* := U(\Phi_n(u^{(k)},\mathcal X_j^*)) + (z_j^{(k)})^\top(\mathcal X_j^*-\widehat{\mathcal X}_n^{(j)})$.
		
		\State \textbf{Check for Convergence:} If $\phi_j^* - \Omega_p(z_j^{(k)}, \lambda^{(k)}) \ge s_j^{(k)} $ for all $j=1,\dots,N_n$, then \textbf{break}.
		
	\State \textbf{Add Cutting Planes:} For each $j$ where the condition fails:
	\State \quad Update $\mathcal V_j^{(k+1)} \leftarrow \mathcal V_j^{(k)} \cup \{\mathcal{X}_j^*\}$
	\State Otherwise set $\mathcal V_j^{(k+1)} \leftarrow \mathcal V_j^{(k)}$.
		
		\State $k \leftarrow k+1$.
		\EndLoop
		
		\State \textbf{Return} $(u^{(k)}, \lambda^{(k)})$ and the value for the given $n$.
	\end{algorithmic}
\end{algorithm}

\begin{remark}\rm
	If $\mathfrak X_n$ is a polytope, Algorithm~\ref{alg:cutting_plane} terminates in finitely many iterations,
	since only finitely many extreme-point constraints exist. For general compact $\mathfrak{X}_n$,
	the algorithm implements a standard outer-approximation method for semi-infinite convex programs,
	terminating once no violated extreme-point constraint can be found by the separation oracle.
\end{remark}

\end{document}